\documentclass[11pt,a4paper]{amsart}
\usepackage[T1]{fontenc}
\usepackage{lmodern,microtype,amsmath,amssymb,amsthm,mathtools}
\usepackage[margin=30mm]{geometry}
\usepackage[colorlinks=true,linkcolor=blue!50!black,citecolor=blue!50!black,urlcolor=blue!50!black]{hyperref}
\usepackage{xcolor,amsrefs}
\newtheorem{Theorem}{Theorem}[section]
\newtheorem{Lemma}[Theorem]{Lemma}
\newtheorem{Proposition}[Theorem]{Proposition}
\newtheorem{ExternalTheorem}[Theorem]{Theorem}
\theoremstyle{remark}
\newtheorem{Remark}[Theorem]{Remark}
\numberwithin{equation}{section}
\newcommand{\ind}{\mathbf1}
\DeclareMathOperator{\Aut}{Aut}
\title[Critical limits for face-constrained labelled maps]{Critical limits of generating functions for face-constrained labelled planar maps}
\author{Zhenyao Sun}
\address{School of Mathematics and Statistics, Beijing Institute of Technology, Beijing, China}
\email{zhenyao.sun@gmail.com}
\date{}
\subjclass[2020]{05A16, 05C30, 05C10}
\keywords{Labelled planar maps, scheme decomposition, critical scaling, Green kernels}
\begin{document}
\begin{abstract}
We study connected planar maps with a fixed number of numbered faces, whose vertices carry integer labels such that the minimum label along the boundary of each face is one. Each face has its own positive weight, and an edge shared by two faces receives the geometric mean of their weights. For any fixed number $k\ge3$ of faces, we prove that the suitably rescaled generating function converges locally uniformly as the face weights approach the critical value $1/12$, possibly at different positive rates. The limit is a finite, continuous sum of integrals over cubic schemes. Each scheme edge carries a Green kernel for a singular differential operator on the positive half-line, and each face contributes one insertion of the derivative of the tree scaling profile. The main difficulty is obtaining uniform estimates for small vertex labels, where the minimum-label constraints of different faces can interact. A positive corner-marking expansion reduces this problem to establishing uniform bounds on the contributions of marked configurations, both for small and large vertex labels. These estimates prove integrability of the surviving diagrams and show that coincident marks and noncubic schemes contribute only lower-order terms.
\end{abstract}
\maketitle
\enlargethispage{3pt}

\section{Introduction}
\subsection{From finite schemes to simultaneous face minima}
Fixing the number of faces of a planar map does not bound its size. One may attach arbitrarily large plane trees or subdivide edges into arbitrarily long chains without adding a face. Removing these two sources of growth leaves a finite collection of reduced maps, usually called schemes. More precisely, after pruning vertices of degree one and suppressing vertices of degree two, a scheme with $k\ge3$ faces has at most $2k-4$ vertices and $3k-6$ edges. The equalities hold precisely when every vertex has degree three. This decomposition separates the finitely many possible schemes from the arbitrarily large tree and chain decorations that give rise to the critical behaviour of the generating functions.

The vertex labels introduce an additional scale that becomes relevant as the generating functions approach their critical point. We consider labels whose increments along an edge belong to $\{-1,0,1\}$. Such labels are natural in the enumeration of maps with distinguished vertices: bijections convert distance information into label restrictions on trees or on maps with several faces. Miermont's construction \cite{miermont}*{Section~2} provides this connection for an arbitrary number of distinguished vertices and in arbitrary genus. In this paper the labelled maps themselves are the objects being counted, and we keep the planar topology throughout.

The simplest local counting problem is a plane tree planted at a corner with root label $l$, all labels at least one, and weight $g$ per edge. Its generating function $R_l(g)$ has an explicit expression, recalled in Section~\ref{sec:trees}. Far from the lower label boundary, it tends to the solution of
\[
 R_\infty=1+3gR_\infty^2,
 \qquad R_\infty(g)=\frac{1-\sqrt{1-12g}}{6g}.
\]
Thus the bulk tree series becomes singular at $g=1/12$. As $g$ approaches the critical value $1/12$, the constraint that all vertex labels are at least one influences the tree generating function $R_l(g)$ even when the root label $l$ is of order $(1-12g)^{-1/4}$. The critical asymptotics of the tree generating function $R_l(g)$ are described by a continuous scaling profile, known from the work of Bouttier and Guitter \cite{bouttierguitter}*{equations~(1.7)--(1.9)}. The derivative of this profile determines the asymptotic contribution of trees required to contain a vertex with label one.

These local formulas have already led to detailed results for maps. Bouttier and Guitter \cite{bouttierguitter} compute the three-point function of planar quadrangulations by decomposing labelled maps into trees, chains and branching pieces. Their Sections~3--4 supply the three-face scheme decomposition and the tree formulas used below. Miermont \cite{miermont}*{Section~3 and Theorem~5 in Section~4.3} develops the general fixed-face scheme decomposition and proves convergence of rescaled labelled-map measures, with cubic schemes surviving in the limit. His convergence theorem applies to continuous test functions satisfying a specified growth bound. Requiring the minimum vertex label along the boundary of each face to equal one imposes a discrete constraint that cannot be directly handled by Miermont's convergence theorem. Establishing the corresponding scaling limit therefore requires additional estimates for small vertex labels.

Beyond imposing minimum-label constraints on several faces simultaneously, we are also interested in allowing each face to have its own weight in the generating function. This multivariate formulation retains information that would be lost with a single edge-counting variable. In the two-face setting, Guitter \cite{guitter}*{Sections~2--4} studies exactly the minimum-one condition on both faces, assigning weights $g$ and $h$ to edges within a face and $\sqrt{gh}$ to shared edges. His analysis concerns two faces and exploits their cyclic skeleton. For three or more faces, the reduced skeleton branches, and distinct face constraints can meet at a common vertex or along a common chain.

Let $\mathcal Z_k(g_1,\ldots,g_k)$ denote the generating function for labelled planar maps with $k$ numbered faces, subject to the constraint that the minimum vertex label along the boundary of each face is one. Its precise definition is given below. We approach the critical point along
\[
 g_i=\frac{1-\lambda_i\varepsilon^4/36}{12},
 \qquad \lambda_i>0,\quad\varepsilon\downarrow0.
\]
We aim to understand the critical asymptotics of $\mathcal Z_k(g_1,\ldots,g_k)$ as $\varepsilon\to0$, uniformly for the positive parameters $\lambda_1,\ldots,\lambda_k$ in compact sets. In particular, we ask whether a suitable rescaling yields a nontrivial limit and whether this limit admits an explicit representation in terms of integrals over schemes.

Our main result identifies the rescaling factor as $\varepsilon^{4k-10}$ and proves locally uniform convergence to a finite sum of scheme integrals, each of which is finite and depends continuously on the positive parameters.

The scheme decomposition and the dominance of cubic schemes are known from the earlier work discussed above. Our main contribution is to establish uniform estimates that accommodate simultaneous minimum-label constraints on all faces and control the singular behaviour arising when multiple vertex labels become small simultaneously, for any fixed number $k\ge3$ of faces and positive parameters $\lambda_1,\ldots,\lambda_k$.

Our proof starts with a positive corner-marking expansion, which allows us to separate the candidate leading contributions from the errors caused by multiple marks. To explain this expansion, consider the minimum-label constraint on a single face. After summing over the trees attached to the corners of a given face, the constraint on the vertex labels requires at least one of these trees to contain a vertex with label one. Choosing one such corner gives a positive sum, but overcounts configurations with several trees touching label one. We first study the leading contributions, corresponding to cubic schemes with one marked corner per face, where the marked corners belong to distinct internal chain vertices. Under the critical scaling $x=\varepsilon l$, the rescaled weight of a marked corner behaves like $x^{-3}$ as $x\to0$. The main difficulty is to prove that the resulting integrals remain finite when several rescaled vertex labels approach zero simultaneously.

To overcome this difficulty, we establish a combinatorial inequality that bounds the number of faces incident to any subset of vertices of a planar map. Combined with the estimates for the tree and chain weights, this inequality gives sufficient decay for the total integral contribution on each small-label scale. A dyadic decomposition then allows us to sum these bounds over all possible scales and establish the integrability of the limiting expressions.

It remains to show that all other configurations give lower-order contributions. These include configurations with additional marked corners, which arise from overcounting in the positive marking expansion, and configurations in which marks from different faces share a vertex or a mark lies at an original scheme vertex. We also need to control noncubic schemes, whose contributions have a different scaling order.

To estimate these terms, we retain the discrete lower cutoff at label one and combine bounds for small and large vertex labels with power-counting arguments that take into account the combinatorial structure of each configuration. We then show that all these contributions vanish after rescaling, so that only the integrals associated with cubic schemes and marks at distinct internal chain vertices remain in the limit. We use the known critical scaling profile of the tree generating function and establish the convergence of the chain weights through an operator-theoretic approach, without requiring explicit formulas for chains whose two adjacent faces have different weight parameters.

\subsection{Generating functions, limiting integrals, and the main theorem}
We study labelled planar maps with a fixed number of numbered faces and a minimum-label condition on each face. Their positive generating series is the discrete object whose critical limit we will identify.

A planar map is a finite connected graph embedded cellularly in the oriented sphere, considered up to orientation-preserving homeomorphism. Loops and multiple edges are allowed, and the cyclic order of edge incidences at every vertex is part of the map.

Let $\mathfrak M_k$ be the set of isomorphism classes of connected planar maps with $k$ numbered faces and integer vertex labels differing by at most one along each edge, such that the minimum of the labels incident to each face is exactly one. Each map $M$ has weight $1/|\Aut(M)|$, where automorphisms preserve the orientation, the numbered faces and the vertex labels. An edge whose two sides have face labels $i,j$ has weight $\sqrt{g_i g_j}$; an $ii$ edge has weight $g_i$. If $a_i(M)=\deg(f_i)/2$ is half the boundary degree of face $i$, with repeated sides counted, the weight is $\prod_i g_i^{a_i(M)}$ and $\sum_i a_i(M)$ is the number of edges. Thus the generating function is
\[
 \mathcal Z_k(g_1,\ldots,g_k)
 =\sum_{M\in\mathfrak M_k}\frac1{|\Aut(M)|}\prod_{i=1}^k g_i^{a_i(M)}.
\]
To interpret the series formally, set $g_i=z w_i$ with fixed positive $w_i$ and expand in $z$. For numerical positive weights, $\mathcal Z_k(g)$ denotes the sum of these nonnegative map weights, equivalently the increasing limit of the sums over maps with at most $N$ edges as $N\to\infty$. This defines a value in $[0,+\infty]$ without any summation-order convention; finiteness at the parameters used below is part of the analytic argument.

The critical scaling used in the theorem and throughout the paper is
\begin{equation}\label{eq:critical-scaling}
 g_i(\varepsilon)=\frac{1-\lambda_i\varepsilon^4/36}{12},\qquad
 \eta_i=\lambda_i/4,
\end{equation}
with positive $\lambda$ fixed and $\varepsilon\downarrow0$.

The limit is expressed as a finite sum of integrals associated with marked cubic schemes. We define the half-line Green kernels and the possible face-mark insertions before stating that limit.

For $\eta>0$ let $a=(4\eta)^{1/4}$ and define
\begin{equation}\label{eq:potential}
 u_\eta(x)=\frac{a^2}{2}\left(1+3\operatorname{csch}^2(ax/2)\right),\qquad x>0.
\end{equation}
Then $u_\eta''=u_\eta^2-\eta$, $u_\eta(\infty)=\sqrt\eta$, and
\begin{equation}\label{eq:potential-expansion}
 u_\eta(x)=\frac6{x^2}+\frac{\eta x^2}{10}-\frac{\eta^{3/2}x^4}{126}+O(x^6),\qquad x\downarrow0.
\end{equation}
For a positive parameter vector $\eta$, write $u_i=u_{\eta_i}$ and $p_i=-u_i'>0$. Let $K_{ij}$ be the positive Green kernel of the Friedrichs operator
\[
 H_{ij}=-\frac{d^2}{dx^2}+u_i+u_j
\]
on $(0,\infty)$. The Friedrichs realization is the self-adjoint operator obtained by closing the positive quadratic form $\int_0^\infty(|f'|^2+(u_i+u_j)|f|^2)\,dx$, initially on smooth compactly supported functions. Thus $K_{ij}$ is the integral kernel of $H_{ij}^{-1}$. In particular, its derivative in the first variable has jump $-1$ at the second variable. The potential is $12/x^2+O(x^2)$ at zero and tends to the positive constant $\sqrt{\eta_i}+\sqrt{\eta_j}$ at infinity. Its two independent local solutions have orders $x^4$ and $x^{-3}$. The regular solution at zero and the decaying solution at infinity can therefore be normalized by
\[
 \phi_{ij}(x)\sim x^4/7,\qquad \psi_{ij}(x)\sim x^{-3}\quad(x\downarrow0),
 \qquad \psi_{ij}(\infty)=0.
\]
With these normalizations $\phi_{ij}'\psi_{ij}-\phi_{ij}\psi_{ij}'=1$ and
$K_{ij}(x,y)=\phi_{ij}(\min(x,y))\psi_{ij}(\max(x,y))$.

For a finite label set $S$ with $|S|=k\ge3$, a \emph{cubic scheme with faces labelled by $S$} is such a planar map with every vertex of degree three and with its faces bijectively labelled by $S$. It has $2k-4$ vertices and $3k-6$ edges.

Fix such a scheme $G$, and write $V(G)$ and $E(G)$ for its vertex and edge sets. Define the finite set
\[
 \mathcal A_G=\{\alpha:S\longrightarrow E(G):
 \alpha(i)\text{ is incident to face }i\text{ for every }i\in S\}.
\]
Thus each face chooses an incident edge. An edge incident to the same face on both sides occurs only once among that face's possible choices. The map $\alpha$ need not be injective: two distinct faces can choose their common edge.

Let $x=(x_v)_{v\in V(G)}\in(0,\infty)^{V(G)}$ and $z=(z_i)_{i\in S}\in(0,\infty)^S$. For an edge $e$, write $v,w$ for its endpoint incidences and $i,j$ for its incident face labels, allowing $v=w$ and $i=j$. Put $S_e(\alpha)=\alpha^{-1}(\{e\})\subseteq\{i,j\}$. We define $F_\alpha(e;x,z)$ by the following four cases. If $S_e(\alpha)=\varnothing$, put
\[
 F_\alpha(e;x,z)=K_{ij}(x_v,x_w).
\]
If $i\ne j$ and $S_e(\alpha)=\{h\}$, where $h\in\{i,j\}$, put
\[
 F_\alpha(e;x,z)=K_{ij}(x_v,z_h)p_h(z_h)K_{ij}(z_h,x_w).
\]
If $i=j$ and $\alpha(i)=e$, put
\[
 F_\alpha(e;x,z)=2K_{ii}(x_v,z_i)p_i(z_i)K_{ii}(z_i,x_w).
\]
If $i\ne j$ and $\alpha(i)=\alpha(j)=e$, put
\begin{align*}
 F_\alpha(e;x,z)={}&K_{ij}(x_v,z_i)p_i(z_i)K_{ij}(z_i,z_j)p_j(z_j)K_{ij}(z_j,x_w)\\
 &+K_{ij}(x_v,z_j)p_j(z_j)K_{ij}(z_j,z_i)p_i(z_i)K_{ij}(z_i,x_w).
\end{align*}
An arbitrary ordering of the two endpoint incidences of $e$ may be used: symmetry of $K_{ij}$ preserves the first three cases and interchanges the two summands in the fourth. Both summands remain for a loop, even if their values coincide. Distinct parallel edges remain distinct elements of $E(G)$.

Let $\mathcal G_S$ be the set of isomorphism classes of cubic schemes with faces labelled by $S$, and let $\Aut(G)$ be the group of orientation-preserving, face-label-preserving automorphisms. We define
\begin{equation}\label{eq:scheme-sum-definition}
 \mathcal I_S(\eta)=\sum_{[G]\in\mathcal G_S}\frac1{|\Aut(G)|}
 \sum_{\alpha\in\mathcal A_G}
 \int_{(0,\infty)^{V(G)}}\int_{(0,\infty)^S}
 \prod_{e\in E(G)}F_\alpha(e;x,z)\,
 \prod_{i\in S}dz_i\prod_{v\in V(G)}dx_v.
\end{equation}
We allow $+\infty$ until finiteness is proved.

\begin{Theorem}[Critical limits of generating functions for labelled planar maps]\label{ms:prop:general-analytic}
For every fixed $k\ge3$, the integral $\mathcal I_{\{1,\ldots,k\}}$ is finite and continuous on $(0,\infty)^k$, and
\[
 \varepsilon^{4k-10}\mathcal Z_k(g_1(\varepsilon),\ldots,g_k(\varepsilon))
 \longrightarrow 3^{3k-6}\mathcal I_{\{1,\ldots,k\}}(\lambda/4)
\]
locally uniformly on positive rate sets.

\end{Theorem}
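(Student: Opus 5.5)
The plan is to follow the route announced in the introduction: scheme decomposition, then a positive corner-marking expansion, then uniform discrete estimates that feed dominated convergence.

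\emph{Step 1: decomposition.} Prune every map $M\in\mathfrak M_k$ to its scheme $G$ by removing trees and suppressing degree-two vertices. Summing over the trees grafted at corners and over chains then writes $\mathcal Z_k$ as a finite sum over schemes (cubic or not) with faces labelled by $\{1,\ldots,k\}$. Each term is a sum over labels $l_v\ge1$ at scheme vertices. It is a product of corner tree factors $R_l(g_i)$ at scheme vertices and of chain weights $W_{ij}(l,l')$ on scheme edges. A chain weight sums over label paths with steps in $\{-1,0,1\}$, with factor $\sqrt{g_ig_j}$ per step and factors $R_{l}(g_i)R_l(g_j)$ per internal vertex.

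The minimum-label constraint on face $i$ says that some corner of face $i$ carries a tree reaching label $1$, or a vertex of label $1$. I write $\ind\{\min=1\}$ as $\ind\{\min\ge1\}-\ind\{\min\ge2\}$, so each corner factor is replaced by $R_l-R_{l-1}$ (shifted labels). The positive corner-marking expansion bounds the indicator above by the sum over corners of face $i$ of the ``this corner reaches $1$'' term. Inclusion--exclusion then gives matching lower bounds, with errors carried by configurations having at least two marks on some face. Hence $\mathcal Z_k$ is squeezed between the singly-marked sum and that sum minus the doubly-marked sums.

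\emph{Step 2: scaling limits of the pieces.} Under $x=\varepsilon l$ and \eqref{eq:critical-scaling}, the Bouttier--Guitter profile gives $R_l(g_i)\approx R_\infty(1-\varepsilon^2 u_i(x)/3+\dots)$. The marked-corner factor $R_l-R_{l-1}$ therefore scales as $\varepsilon^3 p_i(x)$ up to constants, and this produces $p_i\sim 12x^{-3}$. For chains, $W_{ij}$ is the Green function of the discrete operator (second difference) plus the potential $1-12\sqrt{g_ig_j}R_lR_l$-type terms. I show $\varepsilon^{-1}W_{ij}(x/\varepsilon,y/\varepsilon)\to cK_{ij}(x,y)$ operator-theoretically, via strong resolvent convergence of discretized $H_{ij}$ to the Friedrichs realization. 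The uniform potential asymptotics $12/x^2$ pin down the Friedrichs extension. I also want pointwise local uniform convergence and the bounds $W\lesssim\varepsilon\,\min(x,y)^4\max(x,y)^{-3}e^{-c|x-y|}$, obtained by comparison with explicit sub/supersolutions built from $l^4$ and $l^{-3}$.

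A mark on a chain splits that chain into two, which gives the three cases of $F_\alpha$. The factor $2$ for an $ii$ edge and the two orderings for a shared edge come from the two sides and the two orders. Power counting for a cubic scheme gives $\varepsilon^{-(2k-4)}$ from vertex sums, $\varepsilon^{3k-6}$ from edges and $\varepsilon^{2k}$ from $k$ marks, each of which contributes a sum ($\varepsilon^{-1}$) times $\varepsilon^3$. This yields $\varepsilon^{-(4k-10)}$, with $3^{3k-6}$ collecting the per-edge constants.

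\emph{Step 3: domination, which is the main obstacle.} Each edge factor is bounded by $\min^4\max^{-3}$ and each mark by $z^{-3}$. I need an integrable majorant, uniform in $\varepsilon$ and in $\lambda$ on compacts, for the discrete sums near the lower boundary where many labels are small simultaneously. My first attempt is a dyadic decomposition by scale: for a set $U$ of vertices and marks whose labels lie in $[2^{-m-1},2^{-m}]$, the weight gained is at least a positive power of $2^{-m}$. This reduces to a combinatorial inequality, namely that the number of faces incident to a vertex subset is controlled by its edge-boundary. The $x^4$ gains on edges leaving $U$ must beat the $x^{-3}$ losses of marks in $U$ and the dimension count, and I would prove the inequality via Euler's formula on the submap. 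The large-label regime is handled by the exponential decay of $\psi$ at infinity.

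\emph{Step 4: errors.} With the same majorant and discrete cutoff $l\ge1$, I show lower order for three kinds of configuration. First, configurations with two or more marks on one face lose an extra factor $\varepsilon^2$ without compensating divergence. Second, for marks coinciding at a vertex or sitting at a scheme vertex, one summation is lost and this costs $\varepsilon$. Third, a noncubic scheme has fewer vertices and edges, and its net power is $\varepsilon^{+1}$ per missing vertex. Summing over the finitely many schemes and applying dominated convergence gives the limit, and also finiteness and continuity of $\mathcal I$, since the majorant is locally uniform in $\eta$.
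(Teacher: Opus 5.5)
Your outline follows the paper's architecture: pruning to schemes, bounding $\prod_i\widehat B_i-\prod_iB_i$ by diagrams with one extra mark on one face, tree and chain asymptotics, a planar face-count inequality, and dyadic power counting. However, two steps fail as written. \textbf{The chain scaling is off by $\varepsilon^{2}$.} Your $W_{ij}(l,l)$ contains the one-step chain of weight $\sqrt{g_ig_j}\approx1/12$, so it cannot obey $W\le C\varepsilon\min(x,y)^4\max(x,y)^{-3}$ at $x=y=\varepsilon l$. In fact $W=\Theta(\varepsilon^{-1})$ on the label scale $\varepsilon^{-1}$. The correct statements are therefore $\varepsilon W_{ij}(x/\varepsilon,y/\varepsilon)\to cK_{ij}(x,y)$, and an envelope with prefactor $\varepsilon^{-1}$ in rescaled variables. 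Your count also forgets that each mark splits a chain and creates one more segment. As written it gives $\varepsilon^{-(2k-4)+(3k-6)+2k}=\varepsilon^{3k-2}$, not $\varepsilon^{-(4k-10)}$. The correct bookkeeping is $3k-4$ label sums and $4k-6$ segments at $\varepsilon^{-1}$ each, plus $k$ marks at $\varepsilon^{3}$ each. Your Step~4 orders inherit the slip. An extra separated mark adds a label sum, a segment and a factor $\varepsilon^3$, so it gains only $\varepsilon$, not $\varepsilon^2$. A contact removes one label sum and one segment, naively a gain of $\varepsilon^2$.

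\textbf{More seriously, Step~4 asserts exactly what is hard.} Error diagrams are not dominated by your Step~3 majorant. Take $k=3$ and the cubic scheme consisting of two loops joined by a bridge. Put the marks of the face inside one loop and of the outer face at that loop's original vertex $w$. Near $x_w=0$ the integrand is $x\cdot x^4\cdot x^{-6}=x^{-1}$ even with the sharp kernel, coming from the loop, the bridge and the two marks. With the paper's envelope one gets $L(\{w\})=-1$. Either way the continuum diagram diverges. Keeping $l\ge1$ makes the sum finite. But where some labels stay $O(1)$, the marks there are $O(1)$ rather than $O(\varepsilon^3)$, so ``one summation is lost'' is not a valid count. ``Without compensating divergence'' is precisely the claim that needs proof.

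The missing idea is to count from the top:
\begin{itemize}
\item Use blocks $2^{n_v}\le l_v<2^{n_v+1}$ with $n_v\ge0$.
\item Write the block exponent in ordered gap variables as $D(\mathrm{all})d_1+\sum_{h\ge2}D(H_h)d_h$, where the $H_h$ are final (high-label) sets.
\item This gives $T_{\mathcal G}=O(\varepsilon^{-d_*}(1+|\log\varepsilon|)^N)$ with $d_*=\max(0,D(\mathrm{all}),\max_HD(H))$.
\item Then prove $D(H)\le B-3$ for every proper $H$. This follows from $D(H)=D(\mathrm{all})-L(H^c)$, $D(\mathrm{all})=B-r-2C$, and the componentwise bound $L(S_j)\ge3-r_{S_j}-2C_{S_j}$, so contact deficits in the low set are paid for by the $-2C$ term.
\end{itemize}
Without this, the lower-order claims for contacts, repeated marks and noncubic schemes are unproved.

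Smaller points:
\begin{itemize}
\item Strong resolvent convergence alone does not give the locally uniform pointwise kernel convergence you need.
\item The kernels decay only in $|x-y|$, so decay in $\max_vx_v$ must come from the marks through a path argument.
\item Positivity in Step~3 must hold for every initial segment of the label order, which may be disconnected. You get it by adding the connected-set bound over components.
\end{itemize}
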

In words, the generating series multiplied by $\varepsilon^{4k-10}$ tends to $3^{3k-6}$ times the marked scheme integral evaluated at $\lambda/4$; the resulting integral is finite and continuous throughout the positive parameter domain. Here ``locally uniformly'' means uniformly for $\lambda$ in every compact subset of $(0,\infty)^k$. The integral uses all cubic schemes, including loops, parallel edges and repeated incidences of a face. It is an explicit finite-dimensional representation; no evaluation of the integrals in elementary functions is claimed.

\begin{Remark}
The result does not address real analyticity, uniform convergence as any $\lambda_i$ approaches zero, or the regime where the number of faces grows.
\end{Remark}

The normalization can already be read from a separated marked cubic scheme. Inserting one distinct degree-two vertex for each of the $k$ face marks gives $3k-4$ label variables and $4k-6$ chain segments. A label sum contributes $\varepsilon^{-1}$, a chain kernel contributes $\varepsilon^{-1}$, and a mark contributes $\varepsilon^3$. The combined order is consequently
\[
 \varepsilon^{-(3k-4)}\varepsilon^{-(4k-6)}\varepsilon^{3k}
 =\varepsilon^{-(4k-10)}.
\]
The proof must justify this count uniformly and exclude all diagrams for which it is not the correct leading order. Sections~\ref{sec:reduction} and~\ref{sec:kernels} establish the marking expansion and its local weights. Section~\ref{sec:subsets} gives the planar subset estimate. Section~\ref{sec:limit} uses it first to bound the errors and then to pass to the integral.

\section{Trees, chains, and a positive marking expansion}\label{sec:reduction}
\subsection{The reduced scheme and its local weights}\label{sec:trees}
To compare the discrete series with the marked scheme integral, we first sum the attached trees and chains and then expand the minimum-label constraint into corner marks. The same weights will describe both the main diagrams and the error diagrams.

We use the pruning and chain-suppression decomposition of Miermont \cite{miermont}*{Section~3.1}; Bouttier and Guitter \cite{bouttierguitter}*{Section~3.2} describe its three-face version. The combinatorial operations apply without any change to our face minima, which will be imposed after the decomposition. Prune edges incident to vertices of degree one, then suppress every maximal degree-two chain. The resulting reduced scheme has minimum degree three and retains the ribbon structure and face numbers. The pruned plane trees are attached in corners, the sectors between consecutive incidences at a vertex. Their root labels are the labels of their attachment vertices. A chain is nonempty, even when its endpoint labels coincide; a loop retains both endpoint incidences. Euler's relation bounds the numbers of vertices and edges of a reduced scheme by $2k-4$ and $3k-6$, respectively, so there are finitely many schemes for fixed $k$.

Let $R_l(g)$ count plane trees planted in a corner with root label $l$, all labels at least one, adjacent increments in $\{-1,0,1\}$, and weight $g$ per edge. The one-vertex tree has weight one; put $R_l=0$ for $l\le0$. The explicit tree series is \cite{bouttierguitter}*{Section~4.1, equation~(4.2), and equations~(1.4)--(1.5)}
\begin{ExternalTheorem}\label{ext:labelled-tree-series}
For $0<g<1/12$, define
\[
 R_\infty(g)=\frac{1-\sqrt{1-12g}}{6g},\qquad
 \frac{q}{1+q+q^2}=gR_\infty(g)^2,\quad 0<q<1.
\]
Then
\[
 R_l(g)=R_\infty(g)\frac{(1-q^l)(1-q^{l+3})}{(1-q^{l+1})(1-q^{l+2})},\qquad l\ge1.
\]
\end{ExternalTheorem}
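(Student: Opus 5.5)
The plan is to derive a discrete recursion for $R_l(g)$ from the planted-tree decomposition. Then I will check that the proposed closed form satisfies it together with the boundary condition $R_0=0$. Finally I will argue that this recursion has a unique solution among families of formal power series. Equality of the nonnegative-coefficient series then gives equality of numerical values for $0<g<1/12$.

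\emph{Step 1 (recursion).} A tree planted at a corner with root label $l\ge1$ consists of the root vertex together with an ordered sequence, possibly empty, of subtrees hanging from the root. Each subtree is attached through an edge of weight $g$ to a child with label $l-1$, $l$ or $l+1$. That child is again the root of a planted tree with all labels at least one. Since $R_0=0$, the child label $l-1=0$ is automatically excluded when $l=1$. Summing the geometric series over the sequence gives
\[
 R_l=\frac{1}{1-g(R_{l-1}+R_l+R_{l+1})},\qquad\text{equivalently}\qquad R_l=1+gR_l(R_{l-1}+R_l+R_{l+1}),\quad l\ge1 .
\]
The coefficient of $g^n$ in $R_l$ is determined by the coefficients of $g^m$, $m<n$, in $R_{l-1},R_l,R_{l+1}$. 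Hence, by induction on $n$ uniformly in $l$, there is a unique family of formal power series $(R_l)_{l\ge1}$ with $R_0=0$ solving the system. The combinatorial series is that solution.

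\emph{Step 2 (the candidate as a formal series).} The equation $q/(1+q+q^2)=gR_\infty^2$ with $R_\infty=1+g+\dots$ defines a unique power series $q(g)=g+O(g^2)$ with $q(0)=0$, because the left side has nonzero linear term in $q$. Since $q=O(g)$, each factor $1-q^{l+1}$ and $1-q^{l+2}$ is an invertible power series. So the candidate
\[
 \tilde R_l=R_\infty\,\frac{u_l u_{l+3}}{u_{l+1}u_{l+2}},\qquad u_m=1-q^m,
\]
is a well-defined formal series for each $l\ge0$. It satisfies $\tilde R_0=0$ because $u_0=0$. Moreover, for $0<g<1/12$, the branch $0<q<1$ is exactly the analytic continuation of this series along $(0,1/12)$. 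Indeed, $q\mapsto q/(1+q+q^2)$ is increasing on $(0,1)$ with supremum $1/3$. Also $gR_\infty^2$ increases from $0$ to $1/3$ on $(0,1/12)$, reaching $1/3$ at $g=1/12$, where $R_\infty=2$.

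\emph{Step 3 (verification of the identity; the main obstacle).} It remains to show that $\tilde R_l$ satisfies the recursion in the form
\[
 1=\tilde R_l^{-1}+g(\tilde R_{l-1}+\tilde R_l+\tilde R_{l+1}).
\]
I will substitute $\tilde R_l=R_\infty\rho_l$. Using $1/R_\infty=1-3gR_\infty$ (from $R_\infty=1+3gR_\infty^2$) and writing $\gamma=gR_\infty^2=q/(1+q+q^2)$, the identity becomes
\[
 \rho_l^{-1}-1=\gamma\bigl(\rho_l^{-1}\cdot 3 -\rho_{l-1}-\rho_l-\rho_{l+1}\bigr)\cdot(\text{after rearrangement}),
\]
an identity that is rational in $x=q^l$ and $q$. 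Clearing the denominators $u_{l-1}\cdots u_{l+3}$ and $1+q+q^2$ reduces it to a polynomial identity in $(x,q)$ of bounded degree in $x$. I expect this algebra to be the only real work. I would handle it either by direct expansion, or by checking the polynomial identity at enough special values of $x$, for instance the poles $x=q^{-1},q^{-2}$ and the limit $x\to0$, which is the bulk equation. Two further checks serve as confirmation: the limit $x\to0$ recovers $R_\infty=1+3gR_\infty^2$, and the rational function is invariant under $x\mapsto q^{-3}/x$ on both sides, which halves the verification.

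\emph{Step 4 (conclusion).} By the uniqueness in Step 1, $\tilde R_l=R_l$ as formal power series. Both sides are power series with nonnegative coefficients. The right side converges on $(0,1/12)$ and coincides there with the closed form, since the closed form is analytic in $g$ on that interval through $q(g)$. By Abel's theorem and positivity, the numerical value of $R_l(g)$ as a sum of tree weights equals the stated formula for every $0<g<1/12$.
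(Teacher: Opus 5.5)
The paper gives no proof of this statement. It imports the formula from Bouttier and Guitter, where it is obtained by exactly your route: the planted-tree recursion $R_l=1+gR_l(R_{l-1}+R_l+R_{l+1})$ with $R_0=0$, followed by a check of the closed form. Steps~1 and~2 are fine, apart from the harmless slip that $R_\infty=1+3g+\cdots$.

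Step~3, which you rightly call the whole content, is not carried out, and the identity you display is wrong. After using $1/R_\infty=1-3gR_\infty$ the prefactor is $gR_\infty$, not $\gamma$. Since $R_\infty=1+3\gamma$, one has $gR_\infty=\gamma/(1+3\gamma)=q/(1+4q+q^2)$. What must be shown is
\[
 q\bigl(\rho_{l-1}+\rho_l+\rho_{l+1}\bigr)+(1+q+q^2)\rho_l^{-1}=1+4q+q^2 .
\]
Let $f$ be the difference of the two sides, viewed as a rational function of $x=q^l$. It has at most simple poles at $x=1,q^{-1},q^{-2},q^{-3}$, and it tends to $0$ both as $x\to0$ and as $x\to\infty$. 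Your symmetry $x\mapsto q^{-3}/x$ pairs $q^{-1}$ with $q^{-2}$ and $1$ with $q^{-3}$. So your proposed checks at $q^{-1},q^{-2}$ test only one pair, and you must add $x=1$. Both checks take one line:
\begin{itemize}
\item at $x=q^{-1}$, the coefficients of $(1-qx)^{-1}$ in $\rho_{l-1}$ and $\rho_l$ are $(1+q^{-1})(1-q)$ and $(1-q^{-1})(1+q)$;
\item at $x=1$, the coefficients of $(1-x)^{-1}$ in $q\rho_{l-1}$ and $(1+q+q^2)\rho_l^{-1}$ are $q^2-1$ and $1-q^2$.
\end{itemize}
Each pair cancels. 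Hence $f$ is a polynomial tending to $0$ at infinity, so $f\equiv0$.

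Step~4 has an unjustified analytic claim. Analyticity of the closed form on the real interval $(0,1/12)$ does not imply that its Taylor series at $0$ converges there, because complex singularities can shrink the radius. Abel's theorem concerns the boundary of the disc of convergence, which is not at issue for $g<1/12$.

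A simple fix is domination. Dropping the constraint that labels be at least one shows $R_l\le R_\infty$ coefficientwise. Moreover $R_\infty(g)=\sum_n 3^n\mathrm{Cat}_n g^n$, with $\mathrm{Cat}_n$ the Catalan numbers, has radius $1/12$. So the tree series converges on $|g|<1/12$, and it agrees with the closed form near $0$ by your formal identity. The closed form is holomorphic on a connected complex neighbourhood of $[0,1/12)$ inside that disc, where $|q|<1$. The identity theorem then gives equality on $(0,1/12)$. Pringsheim's theorem for series with nonnegative coefficients would serve equally well.
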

For faces labelled $i,j$, define a one-step chain matrix on the positive integers by
\[
 T_{ij}(l,m)=\sqrt{g_i g_j}
 \sqrt{R_l(g_i)R_l(g_j)R_m(g_i)R_m(g_j)}\,
 \ind_{\{|l-m|\le1\}},\qquad G_{ij}=\sum_{r\ge1}T_{ij}^r.
\]
Here $T_{ij}^r(l,m)$ sums over all intermediate labels on a chain of $r$ edges from label $l$ to label $m$. The sum starts at $r=1$ because a scheme edge is replaced by a nonempty chain. These series converge when every $g_i<1/12$. Indeed their row sums are at most
$3\sqrt{g_i g_j}R_\infty(g_i)R_\infty(g_j)<1$, since $\sqrt gR_\infty(g)<1/\sqrt3$ for $g<1/12$. The same estimate holds with a finite upper label cutoff.

The endpoint square roots in a chain supply $\sqrt{R_l(g_i)}$ on each incident side of face $i$. At a retained vertex, the two sides bordering each corner combine to exactly one factor $R_l(g_i)$. This remains true at loops and at any vertex degree. Hence summing the internal labels and nonzero lengths of every segment gives exactly one $G_{ij}$ per segment, with no further vertex factor. A corner selected on a face is remembered separately if another corner of the same face occurs at the same vertex.

\subsection{Face minima as corner marks}
To isolate the leading scheme contribution, we mark one corner on every face. Contacts and one repeated face mark will bound the error in replacing a face-minimum condition by the sum of its possible marks.

Vertices of a reduced scheme are called \emph{original vertices}. In a chain replacing a scheme edge, all other vertices have degree two. A mark is a chosen corner, with the weight below. We use one mark for each face in the leading term; the error estimate also uses one additional mark on one face.

For a corner of face $i$ at label $l$, put
\[
 \delta_i(l)=1-\frac{R_{l-1}(g_i)}{R_l(g_i)},\qquad
 \widehat B_i=\sum_{c\text{ of face }i}\delta_i(l_c),\qquad
 B_i=1-\prod_{c\text{ of face }i}(1-\delta_i(l_c)).
\]
The exact marking reduction below will establish how $B_i$ enforces minimum label one on face $i$ and will justify the associated error bound. Here we define the two classes of marked expressions needed for that argument. Expanding $\prod_i\widehat B_i$ selects one corner of every face, which we call the main case. We also allow one additional chosen corner on one face, which we call the repeated-face case. Each chosen corner carries its factor $\delta_i$. Suppress unmarked degree-two chain vertices and retain every marked chain vertex as an inserted degree-two vertex. Marks are called separated when they lie at distinct internal chain vertices. All remaining configurations are called discrete contacts: either a mark lies at an original vertex or several marks share one chain vertex. This definition concerns graph vertices, not numerical equality of labels at different vertices.

For a vertex set $S$ of such a marked graph, let $O_S,I_S,M_S$ count its original vertices, inserted vertices, and marks. Put $C_S=M_S-I_S$. Let $E(S)$ count internal edges and let $c_S=|\partial S|$ count edges from $S$ to its complement, including multiplicities. Let $f(S)$ count distinct global faces incident to $S$. In the repeated-face case put $r_S=1$ if both marks of that face lie in $S$, and put $r_S=0$ otherwise; in the main case always put $r_S=0$. Define the two exponents
\[
 L(S)=|S|+|E(S)|+3c_S-3M_S,\qquad
 D(S)=|S|+|E(S)|-2c_S-3M_S.
\]
Order the marked-graph vertices by nondecreasing labels, breaking ties arbitrarily. A low-label set is an initial segment of this ordering; it need not be connected. The low-label exponent of such a set $S$ means $L(S)$. The exponent $L(S)$ measures the power gained when the coordinates in $S$ approach zero together. The exponent $D(S)$ will measure growth when labels in $S$ become large. We define them for arbitrary vertex sets because an initial or final segment of a label ordering need not be connected.

For the full vertex set, write $v=O_{\mathrm{all}}$, $I=I_{\mathrm{all}}$, and $M=M_{\mathrm{all}}$ for the total numbers of original vertices, inserted vertices, and marks, respectively. Put $B=2v-2$ and $C=C_{\mathrm{all}}=M-I$, the total contact count. Write $r=M-k\in\{0,1\}$ for the total surplus of marks.
The quantities $C_S$ and $C$ are nonnegative. Every inserted vertex carries at least one mark; marks at original vertices and extra marks at inserted vertices account exactly for $C=M-I$. In particular, $C=0$ means that every mark is at its own internal chain vertex. We call a graph obtained after retaining the marked vertices a \emph{marked diagram}; its edges are the remaining chain segments.
\begin{Proposition}[Positive marking reduction]\label{ms:lem:v8-marking-reduction}
For each fixed $k\ge3$, the series $\mathcal Z_k$ is the sum over finitely many reduced schemes of their decorated skeleton weights multiplied by $\prod_i B_i$. The contribution obtained by replacing this factor by $\prod_i\widehat B_i$ counts one chosen corner of each face. The replacement error is nonnegative and bounded by a finite sum of marked diagrams with one additional mark on one face. After suppressing unmarked chain vertices, every term is a product of one $G_{ij}$ per remaining segment and one $\delta_i$ per mark, with the original inverse-automorphism weights and corner multiplicities. All identities first hold with a finite skeleton-label cutoff and then coefficientwise without that cutoff.
\end{Proposition}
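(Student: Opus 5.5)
We prove the proposition in three steps. First we apply the scheme decomposition to unconstrained labelled maps. Then we identify the face-minimum constraint with the factor $B_i$ corner by corner. Finally we compare $\prod_i B_i$ with $\prod_i\widehat B_i$ by an elementary inequality.

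\emph{Decomposition.} We impose a finite upper label cutoff $N$, so that all sums are finite sums of power series. We then apply the pruning and chain-suppression decomposition of Miermont \cite{miermont}*{Section~3.1}. A map in $\mathfrak M_k$ corresponds bijectively to a reduced scheme with the following decorations: integer labels on its vertices, a nonempty labelled chain on each edge, and a labelled plane tree in each corner. Labels must be compatible along edges, and the inverse automorphism weight is inherited from the scheme. Weights factor edge by edge, with $ii$ edges weighted $g_i$ and $ij$ edges weighted $\sqrt{g_ig_j}$. Since labels are only translated globally, we first consider labels $\ge1$ on all vertices, which are the labels seen by the faces.

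\emph{Exact marking reduction.} Fix a face $i$ and a corner $c$ of that face with label $l$. The trees of this corner with all labels $\ge1$ have total weight $R_l(g_i)$. Those with all labels $\ge2$ have weight $R_{l-1}(g_i)$, by translating all labels down by one. Hence the fraction of the corner weight coming from trees that reach label one is exactly $\delta_i(l)=1-R_{l-1}/R_l$.

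The constraint ``minimum label one on face $i$'' must be stated against a normalization in which all labels are at least one. We therefore sum over the label configurations in which every label on face $i$ is at least one. Given such a configuration, the requirement that some corner of face $i$ sees label one has relative weight $1-\prod_c(1-\delta_i(l_c))=B_i$. This uses the independence of the trees attached to different corners. A subtle point is that a tree attached at a vertex borders one face only, so the condition on face $i$ involves only the trees in corners of face $i$, together with the skeleton labels. If a skeleton vertex itself has label one, then $\delta_i(1)=1$ because $R_0=0$, so $B_i=1$ automatically. The constraints for the different faces thus multiply, which gives $\prod_iB_i$.

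Next we sum the trees into $R$ factors and the chains into $G_{ij}$. The square-root bookkeeping described before the proposition then yields exactly one $G_{ij}$ per segment and no vertex factors. At corners that remain marked we keep the inserted degree-two vertex, splitting the chain there. A chain with a marked internal vertex at label $l$ is then $G_{ij}(\cdot,l)\delta(l)G_{ij}(l,\cdot)$. This holds because the $\sqrt R$ endpoint factors from the two sides recombine into the same product, and it is compatible with the definition $G=\sum_{r\ge1}T^r$.

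\emph{Error bound.} Finally we use the elementary inequality, for $t_c\in[0,1]$,
\[
 0\le \sum_c t_c-\Bigl(1-\prod_c(1-t_c)\Bigr)\le\sum_{c<c'}t_ct_{c'}.
\]
This follows by induction on the number of corners, or from Bonferroni's inequalities. It gives $0\le \widehat B_i-B_i\le\sum_{c<c'}\delta_i\delta_i$. Telescoping the product,
\[
 \prod_i\widehat B_i-\prod_iB_i=\sum_j\Bigl(\prod_{i<j}B_i\Bigr)(\widehat B_j-B_j)\Bigl(\prod_{i>j}\widehat B_i\Bigr).
\]
Bounding $B_i\le\widehat B_i$, the replacement error is nonnegative and at most $\sum_j \bigl(\sum_{c<c'}\delta_j\delta_j\bigr)\prod_{i\ne j}\widehat B_i$. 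Expanded, this is exactly the sum of repeated-face marked diagrams, with the corner multiplicities recorded.

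Finiteness of the set of diagrams comes from the finitely many schemes, together with the finitely many ways to place at most $k+1$ marks on segments or original corners up to the chain-suppression equivalence. All statements are identities or inequalities of nonnegative sums at cutoff $N$. They hold coefficientwise in $z$, where $g_i=zw_i$, and pass to $N\to\infty$ by monotone convergence.

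The main obstacle is the exact corner-by-corner identification. One must check that each face's constraint depends only on its own corners with independent tree weights, including at loops and at vertices carrying several corners of the same face. One must also make sure the $\sqrt R$ recombination at inserted vertices reproduces $G\,\delta\,G$ exactly, without a stray factor $R_l$.
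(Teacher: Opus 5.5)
Your proposal is correct and follows essentially the same route as the paper. Both arguments use the scheme decomposition under a label cutoff, then get the exact per-face factor $B_i$ from the independence of the corner trees (the paper phrases this as raising the lower label bound of face $i$ and doing inclusion--exclusion), then bound the error by Bonferroni plus telescoping, and finally pass to the limit in the cutoff by monotone convergence. The only step the paper spells out that you merely assert is the orbit--stabilizer argument: summing $1/|\Aut(M)|$ over decoration orbits equals $1/|\Aut(G)|$ times the sum over all decorations, and likewise for marked decorations.
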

\begin{proof}
Prune the trees and suppress degree-two chains as described above. For a reduced $k$-face scheme, Euler's formula gives $e=v+k-2$ and the minimum-degree condition gives $v\le2k-4$, so only finitely many schemes occur. Equality holds exactly for cubic schemes. Inverse automorphism weights are preserved: for a fixed scheme, its finite automorphism group acts on decorations, and the sum of inverse stabilizer orders over decoration orbits is the sum over decorations divided by the group order. This also applies to marks and their orders.

Initially impose a finite cutoff on all skeleton labels. Fix a labelled skeleton. Its edge and corner-tree weight is
\[
 W=\prod_e\sqrt{g_{i(e)}g_{j(e)}}\prod_c R_{l_c}(g_{i(c)}).
\]
Raising the lower label bound of face $i$ by one multiplies its corner factors by $1-\delta_i(l_c)$, including the exclusion of an incident vertex of label one because $R_0=0$. Inclusion--exclusion over the $k$ faces therefore gives the exact weight $W\prod_iB_i$. Since $0\le\delta_i\le1$,
\[
 0\le B_i\le \widehat B_i,\qquad
 0\le \widehat B_i-B_i\le\sum_{c<d}\delta_i(l_c)\delta_i(l_d).
\]
Telescoping the difference of the two products bounds $\prod_i\widehat B_i-\prod_iB_i$ by a sum with one extra mark of one face label. Thus it suffices to evaluate the main marked diagrams and bound the repeated-face diagrams. At fixed size $n$, every retained label is at most $n+1$, so increasing the cutoff recovers the exact series coefficientwise. For positive weights, all skeleton contributions and their pointwise replacement errors are nonnegative. Their cutoff sums therefore increase to the corresponding untruncated sums; monotone convergence preserves the positive expansions and error bounds, allowing $+\infty$ at this stage. Suppressing unmarked chain vertices expresses each as a product of finitely many chain kernels and mark factors, summed over its remaining labels. The number of resulting graphs depends only on $k$.
\end{proof}

\section{Uniform estimates for trees and chains}\label{sec:kernels}
All estimates in this section are uniform when $\lambda$ ranges over a fixed compact subset of $(0,\infty)^k$ and $\varepsilon$ is sufficiently small. In particular, all discrete weights satisfy $0<g_i(\varepsilon)<1/12$. The continuum parameters are always $\eta_i=\lambda_i/4$.

\subsection{The corner insertion}
A corner mark is a discrete difference of the tree partition function. Its scaling limit is therefore the derivative of the tree profile, but a bound down to the first admissible label will also be needed.
\begin{Lemma}[Corner weights]\label{ms:lem:v8-corner-limit}
The corner weight decays at least cubically in its label, with an additional exponential cutoff on the scale $\varepsilon^{-1}$. After rescaling by $\varepsilon^{-3}$, it tends to one third of the continuum insertion $p_i$. More precisely, uniformly on compact positive rate sets, $0\le\delta_i(l)\le Cl^{-3}e^{-c\varepsilon l}$ for $l\ge1$, and locally uniformly for $x>0$,
\[
 \varepsilon^{-3}\delta_i(\lfloor x/\varepsilon\rfloor)\longrightarrow p_i(x)/3.
\]
\end{Lemma}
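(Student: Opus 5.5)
The plan is to argue directly from the explicit tree series in Theorem~\ref{ext:labelled-tree-series}. For $l=1$ we have $R_0=0$ and hence $\delta_i(1)=1$, which is consistent with the bound, so assume $l\ge2$. Write $q=q(g_i)$ and cancel the common factor $R_\infty$ in $R_{l-1}/R_l$. The resulting identity is
\[
 \frac{R_{l-1}}{R_l}=\frac{(1-q^{l-1})(1-q^{l+2})^2}{(1-q^{l})^2(1-q^{l+3})}.
\]
With $\varphi(t)=\log(1-q^t)$, we get $\log(R_{l-1}/R_l)=\varphi(l-1)-2\varphi(l)+2\varphi(l+2)-\varphi(l+3)$. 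The coefficients $(1,-2,2,-1)$ at the nodes $(-1,0,2,3)$ have vanishing zeroth, first and second moments, and third moment $-12$. Hence this combination is a third-order difference operator. By Taylor's formula with integral remainder, it equals $-2\varphi'''(\xi)$ averaged against a fixed nonnegative Peano kernel supported in $[l-1,l+3]$. Since $\delta_i=1-e^{\log(R_{l-1}/R_l)}$ and $\log(R_{l-1}/R_l)\le0$, we have $0\le\delta_i\le -\log(R_{l-1}/R_l)$.

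The first analytic step is the asymptotics of $q$ along the scaling \eqref{eq:critical-scaling}. Expand $1-12g_i=\lambda_i\varepsilon^4/36$ in $R_\infty$ to get $R_\infty=2-c\sqrt{\lambda_i}\,\varepsilon^2+O(\varepsilon^4)$. Then $gR_\infty^2=\tfrac13-c'\sqrt{\lambda_i}\,\varepsilon^2+O(\varepsilon^4)$. Near $q=1$ one has $q/(1+q+q^2)=\tfrac13-\tfrac19(1-q)^2+O((1-q)^3)$. Solving gives $q=e^{-b\varepsilon}$ with $b=b(\varepsilon,\lambda_i)\to a_i=(4\eta_i)^{1/4}=\lambda_i^{1/4}$ uniformly on compact $\lambda$-sets. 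I expect this constant matching, i.e.\ checking that the coefficients produce exactly $a_i$, to be the main bookkeeping obstacle. It is routine but must be done carefully, and a mismatch here would spoil the factor $1/3$.

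Next I bound and take limits for $\varphi(t)=\log(1-e^{-b\varepsilon t})$. One computes $\varphi''(t)=-(b\varepsilon)^2/(4\sinh^2(b\varepsilon t/2))$, so that $\varphi'''(t)=(b\varepsilon)^3\,\Phi(b\varepsilon t)$ with $\Phi(y)=-\tfrac14\frac{d}{dy}\operatorname{csch}^2(y/2)$. The function $\Phi$ is positive, satisfies $\Phi(y)\le Cy^{-3}e^{-y}$ uniformly for $y>0$, and behaves like $2y^{-3}$ near zero.

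For $l\ge2$ the Peano kernel lives on $[l-1,l+3]\subset[l/2,3l]$. Therefore $-\log(R_{l-1}/R_l)\le C\sup_{[l/2,3l]}|\varphi'''|\le Cl^{-3}e^{-c\varepsilon l}$, with $c=\inf b/2>0$. This gives the uniform bound $0\le\delta_i(l)\le Cl^{-3}e^{-c\varepsilon l}$.

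For the limit, set $l=\lfloor x/\varepsilon\rfloor$ with $x$ in a compact subset of $(0,\infty)$. Then $\varepsilon(l+s)\to x$ uniformly for $s\in[-1,3]$, and $b\to a_i$. Hence $\varepsilon^{-3}\log(R_{l-1}/R_l)\to-2a_i^3\Phi(a_ix)$ locally uniformly. Since this logarithm is $O(\varepsilon^3)$, we get $\varepsilon^{-3}\delta_i(l)\to 2a_i^3\Phi(a_ix)=-\tfrac{a_i^2}{2}\frac{d}{dx}\operatorname{csch}^2(a_ix/2)$.

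Finally, differentiate \eqref{eq:potential}: $p_i=-u_i'=-\tfrac{3a_i^2}{2}\frac{d}{dx}\operatorname{csch}^2(a_ix/2)$. The limit is therefore exactly $p_i(x)/3$, and all estimates are uniform on compact positive rate sets.
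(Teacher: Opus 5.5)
Your argument is correct in substance, but it follows a different route from the paper. The paper computes the numerator of $1-R_{l-1}/R_l$ explicitly and obtains the closed form
\[
\delta_i(l)=\frac{q_i^{l-1}(1-q_i)^3(1+q_i)(1+q_i^{l+1})}{(1-q_i^l)^2(1-q_i^{l+3})},
\]
which is valid for every $l\ge1$; at $l=1$ it gives $\delta_i(1)=1$ without a separate case. Both the envelope and the limit then follow from $1-e^{-s}\asymp\min(s,1)$ and a direct expansion at $l=\lfloor x/\varepsilon\rfloor$.

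You never compute $\delta_i$. Instead you write $-\log(1-\delta_i)$ as a third-order difference of $\varphi(t)=\log(1-q^t)$, with stencil $(1,-2,2,-1)$ at nodes $(-1,0,2,3)$, and apply a Peano kernel. That kernel is indeed one-signed. With $\tau=t-l$, minus the kernel equals $\tfrac12(1+\tau)^2$ on $[-1,0]$, $\tfrac12(1+2\tau-\tau^2)$ on $[0,2]$, and $\tfrac12(3-\tau)^2$ on $[2,3]$, with total mass $2$. Only its $L^1$ norm is needed for the bound anyway.

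The paper's route is shorter and handles $l=1$ in the same formula. Yours explains why a mark scales like $\varepsilon^3$: it is a third difference. It also exhibits the limit directly as a derivative of $\operatorname{csch}^2$, that is, of the profile $u_i$. The identification with $p_i/3=-u_i'/3$ then becomes structural rather than a feature of one formula.

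The constant matching you flagged is painless. Put $q=e^{-\theta}$ and $s=\sqrt{\lambda_i}\,\varepsilon^2/6$. Then exactly $R_\infty=2/(1+s)$ and $gR_\infty^2=(1-s)/(3(1+s))$, hence $2\sinh^2(\theta/2)=3s/(1-s)$. This gives $\theta=\lambda_i^{1/4}\varepsilon\,(1+O(\varepsilon^2))$ uniformly on compact rate sets, so $b\to a_i$ as you claimed.

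One intermediate claim is false as stated. The function $\Phi(y)=\tfrac14\operatorname{csch}^2(y/2)\coth(y/2)$ behaves like $e^{-y}$ as $y\to\infty$, so $\Phi(y)\le Cy^{-3}e^{-y}$ fails for large $y$. Use instead $\Phi(y)\le Cy^{-3}e^{-y/2}$, which holds because $y^3e^{y/2}\Phi(y)$ is bounded on $(0,\infty)$. On $[l/2,3l]$ this gives $\varphi'''\le Cl^{-3}e^{-b\varepsilon l/4}$. The envelope therefore holds with $c=\inf b/4$ rather than $\inf b/2$. The lemma only asserts the existence of some $c>0$, so its conclusion is unaffected.
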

\begin{proof}
Use Theorem~\ref{ext:labelled-tree-series}, writing $q_i$ for the parameter $q$ associated with $g_i$. Direct substitution in $1-R_{l-1}/R_l$ gives the exact numerator factorization
\[
 \delta_i(l)=\frac{q_i^{l-1}(1-q_i)^3(1+q_i)(1+q_i^{l+1})}
 {(1-q_i^l)^2(1-q_i^{l+3})}.
\]
The defining equation for $q_i$ gives $-\log q_i=\lambda_i^{1/4}\varepsilon+O(\varepsilon^3)$ uniformly on compact positive rate sets. Using $1-e^{-s}\asymp\min(s,1)$ in this factorization proves the stated bound for $\delta_i$. Expanding the same expression for $l=\lfloor x/\varepsilon\rfloor$ proves its locally uniform limit $-u_i'(x)/3=p_i(x)/3$.

The profile itself is the established continuum two-point function recalled by Bouttier and Guitter \cite{bouttierguitter}*{equations~(1.1) and~(1.7)--(1.9)}. In their notation, set $\epsilon_{\mathrm{BG}}=\varepsilon^4$, $\Lambda=\lambda_i/36$ and $D=x$. Their parameter is then $\alpha=\sqrt{3/2}\,\Lambda^{1/4}=\lambda_i^{1/4}/2$, and their functions satisfy
\[
 \mathcal F(x;\alpha)=-u_i(x)/3,\qquad
 \mathcal G(x;\alpha)=\partial_x\mathcal F(x;\alpha)=p_i(x)/3.
\]
For $l\ge2$, their discrete two-point function is $G(l;g_i)=\log(R_l(g_i)/R_{l-1}(g_i))$, so $\delta_i(l)=1-\exp(-G(l;g_i))$ has the same leading profile. The exact factorization above supplies the additional uniformity on compact positive rate sets and the exponential estimate valid for every $l\ge1$, which are needed here beyond identification of the profile.
\end{proof}

\subsection{The chain resolvent}
A chain can be summed as the resolvent of a nearest-neighbour transfer matrix. We first compare this resolvent with explicit critical and massive kernels. This provides both its boundary decay and its decay across a long label difference.
\begin{Lemma}[Chain kernels]\label{ms:lem:v8-chain-limit}
The chain kernel has a cubic vanishing bound in its smaller label, an inverse-square bound in its larger label, and exponential decay in their difference. Rescaling the kernel by $\varepsilon$ gives three times the continuum Green kernel. More precisely, uniformly on compact positive rate sets, the unshifted infinite-cutoff chain satisfies
\[
0\le G_{ij}(l,m)\le C\min(l,m)^3\max(l,m)^{-2}e^{-c\varepsilon|l-m|}.
\]
Moreover $\varepsilon G_{ij}(\lfloor x/\varepsilon\rfloor,\lfloor y/\varepsilon\rfloor)\to3K_{ij}(x,y)$ locally uniformly for positive $x,y$ and rates. Both statements include coincident face labels and coincident rates.
\end{Lemma}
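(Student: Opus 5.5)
We have to prove Lemma~\ref{ms:lem:v8-chain-limit}. We write $G_{ij}=T(1-T)^{-1}$ on $\ell^2(\mathbb N)$, with $T=T_{ij}$, and treat $1-T$ as a discrete Schr\"odinger operator. Put $\rho_l=\sqrt{g_ig_jR_l(g_i)R_l(g_j)}$. Then
\[
T(l,m)=\rho_l^{1/2}\rho_m^{1/2}\ind_{|l-m|\le1}.
\]
Conjugating by the diagonal matrix $\rho^{1/2}$ shows that $(1-T)f=h$ is equivalent to a second-order difference equation
\[
-\Delta\tilde f(l)+V(l)\tilde f(l)=\tilde h(l),\qquad V(l)=\frac{1}{\rho_l}-3,
\]
where $\Delta$ is the discrete Laplacian. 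The precise conjugation is $\tilde f=\rho^{-1/2}f$ up to the correct diagonal normalization, and $V$ is read off from the diagonal. Theorem~\ref{ext:labelled-tree-series} gives $R_l$ explicitly. Expanding $1/(\sqrt{g_ig_j}\sqrt{R_lR_l})-3$ with $\sqrt{g}R_\infty\approx 1/\sqrt3$ gives
\[
V(l)\approx \tfrac13\bigl(u_i+u_j\bigr)(\varepsilon l)\,\varepsilon^2
\]
in the scaling regime. Near the boundary, $V(l)\sim 12/l^2$ uniformly in $\varepsilon$, exactly as in the continuum. The factor $3$ in the limit $3K_{ij}$ comes from this $1/3$ in the Laplacian normalization.

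\emph{Step 1: positivity and comparison.} All entries of $T$ are nonnegative, so $G_{ij}$ is nonnegative. It is also monotone in $T$ entrywise, which lets us compare with simpler matrices. For an upper bound, we dominate $\rho_l$ by an explicit $\bar\rho_l$ whose potential is at least
\[
\frac{12-\delta'}{l^2}+c\varepsilon^2
\]
for a small $\delta'>0$ (or one with exactly solvable discrete solutions). This is a discrete Bessel-type operator with mass $\asymp\varepsilon$. For it we build discrete super-solutions: a regular solution behaving like $l^4$ (up to corrections) and a decaying solution like $l^{-3}e^{-c\varepsilon l}$. We then write the Green function as $\phi(\min)\psi(\max)/W$ via the discrete Wronskian. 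A maximum principle for $M$-matrices (here $1-T$ is diagonally dominant after conjugation) converts the super-solutions into the bound
\[
G\le C\,\min^3\max^{-2}e^{-c\varepsilon|l-m|}.
\]
The exponents come from the conjugation: the Green function of the $12/l^2$ operator is $\min^4\max^{-3}$, and the factors $\rho^{1/2}$ together with the scaling of $1-T$ produce the stated powers. If the direct count does not match, I would recheck the normalization of the diagonal factor.

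\emph{Step 2: convergence.} Rescale with $f_\varepsilon(x)=\varepsilon G(\lfloor x/\varepsilon\rfloor,\lfloor y/\varepsilon\rfloor)$ for fixed $y$. The difference equation becomes
\[
-\varepsilon^{-2}\Delta+\tfrac13(u_i+u_j)+o(1)
\]
locally uniformly on compacts of $(0,\infty)$. The jump of the discrete derivative at $y$ is then $-3$ in rescaled units. The Step~1 bounds give equicontinuity, through discrete derivative bounds from the equation, and domination by $\min(x,y)^3\max(x,y)^{-2}e^{-c|x-y|}$. By Arzel\`a--Ascoli every subsequential limit $f$ satisfies three conditions. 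It solves $H_{ij}f=3\delta_y$ in the distributional sense on $(0,\infty)$. It is $O(x^3)$ at $0$, which rules out the $x^{-3}$ solution, so it must be the $x^4$ branch. It decays at infinity. Uniqueness of such a solution identifies $f=3K_{ij}(\cdot,y)$, and the joint uniformity in $(x,y,\lambda)$ follows from the same compactness argument. Coincident labels and rates need no change, since only $u_i+u_j$ enters.

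\emph{Main obstacle.} The hard part is the uniform boundary bound in Step~1 near small labels. There we need discrete sub- and super-solutions valid for all $l\ge1$ uniformly in $\varepsilon$, with the exact boundary exponent coming from $V(l)\approx12/l^2$. I would first try to verify the exact $\varepsilon=0$ case, where $R_l$ is rational and the discrete solutions are explicit polynomials or rational functions in $l$. I would then treat $\varepsilon>0$ by monotonicity in $g$, which gives the upper bound, combined with an exponential weight $e^{c\varepsilon l}$ conjugation to obtain the decay.
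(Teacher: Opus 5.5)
Your overall architecture is the paper's: you rewrite the chain resolvent as the Green function of a discrete Schr\"odinger operator, bound it by comparison, and identify the limit by equicontinuity and uniqueness. However, Step~1, which is the substance of the lemma, is not established, and your explanation of the exponents is wrong.

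First, the normalization. With $w_l=\sqrt{g_ig_j}\,R_l(g_i)R_l(g_j)$ one has $T(l,m)=\sqrt{w_lw_m}\,\ind_{\{|l-m|\le1\}}$. Your $\rho_l$ has a spurious square root, which gives $1/\rho_l-3\to3$ in the bulk. With the correct weight, $G=\operatorname{diag}(w^{-1/2})\mathcal H^{-1}\operatorname{diag}(w^{-1/2})-I$ with $\mathcal H=(3I-A)+\operatorname{diag}(V)$. The $w_l$ are bounded above and away from zero uniformly, so the diagonal factors cannot turn $\min(l,m)^4\max(l,m)^{-3}$ into $\min(l,m)^3\max(l,m)^{-2}$.

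Second, your comparison potential $(12-\delta')/l^2+c\varepsilon^2$ does not lie below $V_l$ at small labels, since $V_1\to V_1^{\mathrm{crit}}=15/4$. The single supersolution capturing both the $l^{-3}$ boundary behaviour and the $e^{-c\varepsilon l}$ decay for all $l\ge1$ is never constructed.

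The paper avoids building one. It uses two separate comparisons:
\begin{itemize}
\item $V_l\ge V_l^{\mathrm{crit}}$, from coefficient monotonicity of $R_l$ in $g$;
\item $V_l\ge c_0\varepsilon^2$, from $R_l\le R_\infty$.
\end{itemize}
The critical kernel is explicit, $\phi_{\min(l,m)}y_{\max(l,m)}\le C\min(l,m)^4\max(l,m)^{-3}$ with $y_l=4(2l+3)/((l+1)^2(l+2)^2)$. The free massive Dirichlet kernel is at most $C\min(l,m)e^{-c'\varepsilon|l-m|}$. These are combined by Feynman--Kac and H\"older with exponents $3/2,3$, based on $V\ge\frac23V^{\mathrm{crit}}+\frac13c_0\varepsilon^2$. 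The resulting $(\min^4\max^{-3})^{2/3}(\min\, e^{-c'\varepsilon|l-m|})^{1/3}=\min(l,m)^3\max(l,m)^{-2}e^{-c\varepsilon|l-m|}$ is exactly the stated envelope. So the exponents $3$ and $-2$ are an interpolation artefact, weaker than the true boundary exponents; that is why your direct count did not match. Since each comparison holds on its own, the entrywise bound $\mathcal H^{-1}\le\min(\mathcal H^{-1}_{\mathrm{crit}},\mathcal H^{-1}_{\mathrm{free},\varepsilon})$ together with $\min(a,b)\le a^{2/3}b^{1/3}$ already suffices.

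Step~2 has the right structure but inconsistent normalization. The correct expansion is $V_l=\varepsilon^2(u_i+u_j)(\varepsilon l)+o(\varepsilon^2)$, not $\tfrac13\varepsilon^2(u_i+u_j)$. Your version contradicts your own $V_l\sim12/l^2$, and $-\partial_x^2+\tfrac13(u_i+u_j)$ has boundary exponents $(1\pm\sqrt{17})/2$, so its Green function is not $K_{ij}$. In fact $\varepsilon\mathcal H^{-1}\to K_{ij}$ with derivative jump $-1$. The factor $3$ in $3K_{ij}$ comes from $w_l^{-1/2}w_m^{-1/2}\to3$, not from the potential. With that correction, the rest of Step~2 coincides with the paper's argument: the jump $-3$ for $\varepsilon G$, exclusion of the $x^{-3}$ branch by the $x^3$ envelope, decay at infinity, and uniqueness.
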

\begin{proof}
Use the explicit tree formula in Theorem~\ref{ext:labelled-tree-series}. We first specify its critical endpoint. As $g\uparrow1/12$, we have $R_\infty(g)\uparrow2$ and $q\uparrow1$. Hence, for every integer $l\ge1$,
\[
 R_l^{\mathrm{crit}}:=\lim_{g\uparrow1/12}R_l(g)
 =2\frac{l(l+3)}{(l+1)(l+2)}.
\]
Indeed $(1-q^j)/(1-q)\to j$ for every positive integer $j$. Since the tree coefficients are nonnegative, monotone convergence identifies this finite limit with the tree sum at $g=1/12$. Put
\[
 w_l^{\mathrm{crit}}=\frac1{12}(R_l^{\mathrm{crit}})^2,
 \qquad
 V_l^{\mathrm{crit}}=(w_l^{\mathrm{crit}})^{-1}-3
 =3\left(\frac{(l+1)^2(l+2)^2}{l^2(l+3)^2}-1\right).
\]
Critical inverse-kernel entries below mean the increasing limits of the Green-kernel entries on $\{1,\ldots,N\}$ with zero boundary values at $0$ and $N+1$, as $N\to\infty$.

For the subcritical kernel, write
\[
 w_l=\sqrt{g_ig_j}R_l(g_i)R_l(g_j),\qquad
 \mathcal H=\operatorname{diag}(w_l^{-1})-A,
\]
where $A(l,m)=\ind_{\{|l-m|\le1\}}$ on the positive integers with a zero boundary value at $0$. The symmetric transfer matrix is $T=\operatorname{diag}(\sqrt w)A\operatorname{diag}(\sqrt w)$, so
\[
 G=\operatorname{diag}(w^{-1/2})\mathcal H^{-1}
       \operatorname{diag}(w^{-1/2})-I.
\]
The factors $w_l$ are bounded above and bounded away from zero uniformly in $l$ for sufficiently small $\varepsilon$ on the given rate set. Relative to the Dirichlet discrete Laplacian $3I-A$, the potential $w_l^{-1}-3$ dominates the critical potential and a positive constant times $\varepsilon^2$. The first domination follows from coefficient monotonicity of $R_l$ in $g$; the second follows from $R_l\le R_\infty$ and the explicit expression for $R_\infty$.

At criticality, direct substitution gives a decaying solution and its regular companion:
\[
 y_l=\frac{4(2l+3)}{(l+1)^2(l+2)^2},\qquad
 \phi_l=y_l\sum_{r=0}^{l-1}(y_ry_{r+1})^{-1}.
\]
Their discrete Wronskian has magnitude one. Thus the critical inverse kernel is \[\phi_{\min(l,m)}y_{\max(l,m)},\] bounded by
$C\min(l,m)^4\max(l,m)^{-3}$. The inverse of the free Dirichlet discrete Laplacian plus $c\varepsilon^2$ is bounded by $C\min(l,m)e^{-c'\varepsilon|l-m|}$, as follows by its two geometric homogeneous solutions. Write $\mathcal L=3I-A$ and $V_l=w_l^{-1}-3$, retaining the critical potential defined above. Choose $c_0>0$ such that $V_l\ge c_0\varepsilon^2$ uniformly. The preceding comparisons give
\[
 V_l\ge\frac23V_l^{\mathrm{crit}}+\frac13c_0\varepsilon^2.
\]
Set $\mathcal H_{\mathrm{crit}}=\mathcal L+\operatorname{diag}(V^{\mathrm{crit}})$ and $\mathcal H_{\mathrm{free},\varepsilon}=\mathcal L+c_0\varepsilon^2I$. The Feynman--Kac formula represents their inverse kernels by integrals over time and over paths of the same continuous-time nearest-neighbor walk, killed on reaching zero. Applying H\"older's inequality with exponents $3/2$ and $3$ to this common path-and-time measure gives
\[
 \mathcal H^{-1}(l,m)
 \le\bigl(\mathcal H_{\mathrm{crit}}^{-1}(l,m)\bigr)^{2/3}
     \bigl(\mathcal H_{\mathrm{free},\varepsilon}^{-1}(l,m)\bigr)^{1/3}.
\]
The two inverse kernels are the critical and free kernels estimated above. The displayed relation between $G$ and $\mathcal H^{-1}$ then gives
\[
 G(l,m)\le C\min(l,m)^3\max(l,m)^{-2}
 e^{-c\varepsilon|l-m|}.
\]
These comparisons hold with a finite cutoff and pass monotonically to the infinite half-line.

To identify the limit, the tree expression also gives, locally uniformly for $x>0$,
\[
 R_{\lfloor x/\varepsilon\rfloor}(g_i)
 =2\bigl(1-\varepsilon^2u_i(x)/3+o(\varepsilon^2)\bigr),
 \qquad
 w_{\lfloor x/\varepsilon\rfloor}^{-1}
 =3+\varepsilon^2(u_i(x)+u_j(x))+o(\varepsilon^2).
\]
Consequently the homogeneous discrete recurrence, divided by $\varepsilon^2$, converges on compact intervals to $H_{ij}f=0$. We spell out the compactness argument with both endpoints allowed to vary. Put
\[
 h_\varepsilon(l,m)=\varepsilon\mathcal H^{-1}(l,m),
 \qquad V_l=w_l^{-1}-3.
\]
For fixed $m$, the source equation is
\[
 2h_\varepsilon(l,m)-h_\varepsilon(l-1,m)-h_\varepsilon(l+1,m)
       +V_lh_\varepsilon(l,m)=\varepsilon\ind_{\{l=m\}}.
\]
Thus the discrete slope changes by $V_lh_\varepsilon(l,m)/\varepsilon$ at an ordinary site and has an additional drop of one at the source. The kernel envelope bounds $h_\varepsilon$ on every compact set of positive rescaled endpoints, and $V_l=O(\varepsilon^2)$ there. Summing the slope changes over an interval and comparing with the bounded endpoint values gives a uniform bound on the slopes, including the one source jump. The piecewise linear interpolations are therefore locally equicontinuous in the first endpoint, uniformly for the second endpoint in a compact interval. Symmetry of $\mathcal H^{-1}$ gives the same control in the second endpoint. The piecewise bilinear interpolations are consequently jointly locally equicontinuous, including across the diagonal.

Every joint subsequential limit solves the homogeneous differential equation away from the limiting source $y$. Summing the displayed source equation over a small interval containing $y$ shows that the slope drops by one, up to the potential term integrated over that interval. Letting its length tend to zero gives derivative jump $-1$. Equicontinuity permits the source labels themselves to vary with $\varepsilon$, provided their rescaled values tend to $y$. The envelope excludes the $x^{-3}$ solution at zero and imposes decay at infinity. Uniqueness of this Green boundary problem therefore identifies the joint limit as $K_{ij}$ for $\varepsilon\mathcal H^{-1}$. The diagonal subtraction $\varepsilon I$ vanishes and $w_l^{-1/2}w_m^{-1/2}\to3$, giving the asserted $3K_{ij}$ limit for $\varepsilon G$. All these bounds hold along arbitrary convergent sequences of rates in a compact positive set. The same compactness and uniqueness argument therefore proves local uniformity in the rates and both endpoints together.
\end{proof}
\begin{Remark}\label{ms:lem:three-face-kernels}
We will use the preceding two lemmas together. They say that a segment and a mark have respective rescaled limits $3K_{ij}(x,y)$ and $p_i(x)/3$, and give the common bounds
\[
 G_{ij}(l,m)\le C\frac{\min(l,m)^3}{\max(l,m)^2}e^{-c\varepsilon|l-m|},
 \qquad \delta_i(l)\le Cl^{-3}e^{-c\varepsilon l}.
\]
No distinctness assumption is made on the face labels, the rates, or the endpoint labels. Thus diagonal kernels and equal-parameter kernels require no separate limiting argument.
\end{Remark}

\section{Planarity and vertex-subset exponents}\label{sec:subsets}
The singularity of a diagram is governed by sets of vertices whose labels have a common scale. A set may be disconnected and may meet a face several times. We first bound the number of distinct faces that a connected set can see, using the ribbon structure rather than simple facial cycles.
\begin{Lemma}[Incident faces]\label{ms:lem:ribbon-subset-face-bound}
Let $G$ be a finite connected ribbon graph embedded in the sphere, allowing loops and multiple edges. Let $B$ be a nonempty proper vertex subset whose induced subgraph is connected. Write $e_B$ for the number of its internal edges, $c_B$ for the number of edges with exactly one endpoint in $B$, and $f_B$ for the number of distinct faces of the full graph incident to a vertex of $B$. Count edges with multiplicity and incidences through ribbon corners. Then $c_B\ge1$ and
\[
f_B\le e_B-|B|+1+c_B.
\]
In particular, suppose every vertex of $B$ has degree two or three in the full graph. If $O$ and $D$ are respectively the numbers of its degree-three and degree-two vertices, then
\[
e_B=\frac{3O+2D-c_B}{2},\qquad
f_B\le\frac O2+\frac{c_B}{2}+1.
\]
\end{Lemma}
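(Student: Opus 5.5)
The plan is to apply Euler's formula to the ribbon subgraph induced by $B$ and then count how the edges leaving $B$ cut the faces of that subgraph into faces of the full graph $G$.

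\emph{Step 1: $c_B\ge1$.} Since $B$ is nonempty and proper and $G$ is connected, some edge joins $B$ to its complement.

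\emph{Step 2: Euler's formula for $H$.} Let $H$ be the induced subgraph on $B$, with its inherited cyclic orders. It is connected and embedded in the sphere, so it is a planar map with $|B|$ vertices and $e_B$ edges. It therefore has
\[
 f(H)=e_B-|B|+2
\]
faces. This includes the degenerate case $|B|=1$, $e_B=0$, where $f(H)=1$.

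\emph{Step 3: attach the leaving edges as legs.} Record each of the $c_B$ edges leaving $B$ as a leg at its endpoint in $B$, inserted into the cyclic order there. Each leg then sits in a well-defined corner of $H$, hence in a single face $F$ of $H$. Let $m_F$ be the number of legs lying in $F$, so that $\sum_F m_F=c_B$.

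\emph{Step 4: faces of $G$ inside a face of $H$.} Every corner of $G$ at a vertex of $B$ is a piece of a corner of $H$, so every face of $G$ incident to $B$ meets the boundary walk of some face $F$ of $H$. The boundary walk of $F$ is cut by its $m_F$ legs into at most $\max(m_F,1)$ arcs, read cyclically. Consecutive corners of $G$ along one arc are joined by edges of $H$ without crossing a leg, so each arc lies in a single face of $G$. Hence
\[
 f_B\le\sum_F\max(m_F,1)=\#\{F:m_F=0\}+c_B .
\]
A face of $G$ may meet several arcs or several faces of $H$; this only helps the inequality.

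\emph{Step 5: the saving of one.} Because $c_B\ge1$, at least one face of $H$ has $m_F\ge1$. Therefore $\#\{F:m_F=0\}\le f(H)-1$, and Step 2 gives
\[
 f_B\le f(H)-1+c_B=e_B-|B|+1+c_B .
\]

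\emph{Step 6: degree two and three.} Counting half-edges at vertices of $B$ gives $3O+2D=2e_B+c_B$, because an internal loop contributes two incidences at its vertex. This yields the formula for $e_B$. Substituting it together with $|B|=O+D$ into the inequality gives $f_B\le O/2+c_B/2+1$.

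\emph{Main obstacle.} The delicate point is Step 4: one must check that the arc decomposition is legitimate for loops, multiple edges, and faces of $H$ whose boundary walk visits a vertex several times. I would handle this by working purely with corners and the face permutation of the ribbon structure. Inserting a leg into a corner of $H$ splits that corner, and hence one cycle of the face permutation of $H$, at exactly one point. So $m_F$ legs split the cycle of $F$ into at most $\max(m_F,1)$ pieces, regardless of any repetitions in the boundary walk.
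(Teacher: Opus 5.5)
Your proof is correct and is essentially the paper's argument: the paper counts the $e_B-|B|+2$ boundary circles of a ribbon neighbourhood of the induced subgraph and the $c_B$ points where exiting half-edges meet them, which is exactly your count of faces of $H$ and of legs inserted into their corners, with the same saving of one coming from a circle (face) that carries at least one leg. Your face-permutation formulation handles loops, multiple edges and repeated visits in the same way as the paper's count of boundary pieces.
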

\begin{proof}
Connectivity of $G$ and properness of $B$ give at least one edge leaving $B$. Take a small closed regular ribbon neighborhood of the connected induced subgraph. It has genus zero, retracts onto that subgraph, and therefore has
\[
b=2-(|B|-e_B)=e_B-|B|+2
\]
boundary circles. This includes the case of a single vertex with no internal edge. Each exiting halfedge meets this boundary once, giving $c_B$ marked points, distributed over $r\ge1$ of its circles. A circle containing $t>0$ such points has $t$ intervening arcs; a circle with no such points contributes one whole circle. Each arc or whole circle follows one face of the full ribbon graph. Every face incident to $B$ is represented in this way, since a corner at a vertex of $B$ lies on one of these boundary pieces. Several pieces may represent the same global face, which only decreases the count. Thus
\[
f_B\le c_B+(b-r)\le c_B+b-1=e_B-|B|+1+c_B.
\]
The argument counts boundary pieces rather than simple facial cycles, so it includes loops, bridges and repeated visits to a face. Finally, the degree sum in $B$ is $2e_B+c_B=3O+2D$. Substituting $|B|=O+D$ gives the last formulas.
\end{proof}
Recall from Section~\ref{sec:reduction} that $L(S)$ is the exponent for lowering the labels in $S$, while $D(S)$ is the exponent for raising them. The next lemma gives the lower bounds needed for the continuum integral and the upper bounds needed for discrete error estimates. In its statement $B=2v-2$, where $v$ is the number of original scheme vertices, and $r=M-k\in\{0,1\}$.
\begin{Lemma}[Subset exponents]\label{ms:lem:general-marked-bounds}
A connected proper vertex set has a low-label exponent bounded below by its original vertices and its exiting edges, with explicit losses for repeated marks and contacts. Every proper vertex set, including a disconnected one, has a high-label exponent at least three below $B$. Precisely, for a connected proper nonempty vertex set $S$,
\[
 f(S)\le |E(S)|-|S|+1+c_S,\qquad
 L(S)\ge 2O_S+2c_S-1-r_S-2C_S\ge3-r_S-2C_S.
\]
For every proper nonempty vertex set $H$, connected or not,
\[
 D(H)\le B-3,\qquad D(\mathrm{all})=B-r-2C.
\]
For a cubic scheme with separated main marks, $L(\mathrm{all})=4k-10>0$, and $L(S)>0$ for every proper nonempty low-label set $S$, meaning an initial segment of the marked-graph vertices in nondecreasing label order.
\end{Lemma}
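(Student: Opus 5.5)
The plan is to treat the marked diagram as a ribbon graph on the sphere. All vertex and edge counts are taken in this graph. Original vertices have degree at least three, inserted vertices have degree two, and every mark sits at a corner of a vertex. I would first prove the connected low-label bound, then obtain the high-label statements from it by complementation.

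\emph{Connected sets.} For a connected proper nonempty $S$, the face inequality $f(S)\le|E(S)|-|S|+1+c_S$ is Lemma~\ref{ms:lem:ribbon-subset-face-bound} applied to the marked diagram. This diagram is still a connected planar ribbon graph, since inserting degree-two vertices changes neither the embedding nor the faces. Next I compare marks with faces. Every mark in $S$ is a corner of a face incident to $S$. In the main case each face carries one mark. In the repeated-face case exactly one face carries two marks, and both lie in $S$ precisely when $r_S=1$. Hence
\[
M_S\le f(S)+r_S\le |E(S)|-|S|+1+c_S+r_S .
\]
I use this inequality on one of the three copies of $M_S$ in $L(S)$:
\[
L(S)\ge 2|S|+2c_S-1-r_S-2M_S .
\]
Substituting $|S|=O_S+I_S$ and $M_S=I_S+C_S$ gives $L(S)\ge 2O_S+2c_S-1-r_S-2C_S$. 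For the final inequality I need $2O_S+2c_S\ge4$. If $O_S\ge1$, this follows from $c_S\ge1$ (Lemma~\ref{ms:lem:ribbon-subset-face-bound}). If $O_S=0$, then $S$ is a connected set of degree-two vertices, hence a path inside a chain or a cycle. A cycle would be a whole component, which is impossible since $S$ is proper. So $S$ is a path, and it has two exiting edge incidences, giving $c_S\ge2$.

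\emph{The full set.} Here $c=0$ and the edge count is $|E|=(v+k-2)+I$: Euler's relation gives $v+k-2$ scheme edges, and each inserted vertex splits one edge. Therefore
\[
D(\mathrm{all})=2v-2+k+2I-3M .
\]
With $M=k+r$ and $I=M-C$ this becomes $B-r-2C$. For a cubic scheme with separated main marks we have $r=C=0$ and $c=0$, so $L(\mathrm{all})=D(\mathrm{all})=B$. Since $v=2k-4$, this equals $4k-10$.

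\emph{Complementation.} The key identity follows from $|E(\mathrm{all})|=|E(H)|+|E(H^c)|+c_H$ and $c_{H^c}=c_H$. For every proper nonempty $H$, with $H^c$ also proper and nonempty, it reads
\[
D(H)=D(\mathrm{all})-L(H^c).
\]
So $D(H)\le B-3$ reduces to $L(H^c)\ge 3-r-2C$ for an arbitrary, possibly disconnected, proper nonempty set. Decompose $H^c$ into its connected components $S_1,\dots,S_m$. No edges join distinct components, so $|S|$, $E$, $c$ and $M$ are additive, and hence so is $L$. Each component satisfies $L(S_j)\ge3-r_{S_j}-2C_{S_j}$. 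We have $\sum_j r_{S_j}\le r$, since both marks of the repeated face lie in at most one component. We also have $\sum_j C_{S_j}\le C$, since the contact counts are nonnegative and additive. Hence
\[
L(H^c)\ge 3m-r-2C\ge 3-r-2C ,
\]
which gives $D(H)\le B-r-2C-(3-r-2C)=B-3$ for every proper nonempty $H$.

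The same additivity yields $L(S)\ge3m>0$ for any proper nonempty low-label set $S$ in the separated cubic main case, because there $r_S=C_S=0$.

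The main obstacle is the step $M_S\le f(S)+r_S$. Its bookkeeping for a face with two corners at one vertex, and for a repeated mark split across components, is where errors could enter. The degree-two path case $O_S=0$ also needs care, since it is the only configuration where $c_S=1$ would break the bound.
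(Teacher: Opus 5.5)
Your proposal is correct and follows the paper's proof essentially step for step: the ribbon face bound combined with $M_S\le f(S)+r_S$ and $I_S=M_S-C_S$, the case split on $O_S$, the Euler count for $D(\mathrm{all})$, and the complementation identity $D(H)=D(\mathrm{all})-L(H^c)$ with additivity over connected components. The only difference is cosmetic: you substitute the face bound into $L(S)$ directly, whereas the paper first rewrites $|E(S)|$ and $f(S)$ via the degree sum, so your algebra is slightly shorter.
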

\begin{proof}
The marked graph is a connected planar ribbon graph, and $S$ is a nonempty proper vertex set with connected induced subgraph. Lemma~\ref{ms:lem:ribbon-subset-face-bound} therefore gives the stated face bound, including loops and multiple edges.

Let $d_w\ge3$ be the degrees of the original vertices. The degree sum gives
\[
 |E(S)|=\frac12\left(\sum_{w\in S\text{ original}}d_w+2I_S-c_S\right),
 \qquad
 f(S)\le\sum_{w\in S\text{ original}}(d_w/2-1)+c_S/2+1.
\]
Each face label occurs at most once except for the specified repeated face label, so $M_S\le f(S)+r_S$. Inserting $I_S=M_S-C_S$ into the definition of $L(S)$ and then using this bound proves
\[
 L(S)\ge2O_S+2c_S-1-r_S-2C_S.
\]
If $O_S\ge1$, then $c_S\ge1$. If $O_S=0$, connectedness makes $S$ an interval of inserted vertices on a scheme edge, and $c_S\ge2$; it cannot be a closed cycle, since each subdivided scheme edge has original endpoints. This proves the second lower bound. For disconnected $S$, both the exponent and the mark counts add over its connected components.

Euler's formula gives $e=v+k-2$ edges in the reduced scheme. After insertion the graph has $v+I$ vertices and $e+I$ edges. Thus
\[
 D(\mathrm{all})=v+e+2I-3M=2v-2-r-2C.
\]
For $S$ complementary to $H$, direct subtraction of edge counts gives
\[
 D(H)=D(\mathrm{all})-L(S).
\]
If $S$ has $b\ge1$ connected components, their total repeated-face surplus is at most $r$, and their total contact count is $C_S$. Their lower bounds yield
\[
 D(H)\le B-r-2C-3b+\sum r_{S_j}+2C_S
 \le B-3b-2C_H\le B-3.
\]
Finally, a cubic $k$-face scheme has $v=2k-4$. In the separated main case $r=C=0$, so the full exponent is $B=4k-10$. For any proper nonempty vertex set $S$, each of its connected components is a proper set with $r_{S_j}=C_{S_j}=0$, so the proved bound gives $L(S_j)\ge3$. Additivity yields $L(S)=\sum_jL(S_j)\ge3\,\#\{\text{connected components of }S\}>0$. Every proper nonempty low-label set is one such vertex set, irrespective of connectivity or the chosen label ordering. This proves the last assertion.
\end{proof}

\section{Discarding errors and taking the critical limit}\label{sec:limit}
\subsection{Discrete estimates with the lower cutoff retained}
The extra-mark expansion and the main marking expansion produce only finitely many types of diagrams for fixed $k$. The following estimate treats all nonsurviving types at once. Its use of the original cutoff $l\ge1$ matters: a contact diagram need not define an integrable continuum expression at zero.
\begin{Proposition}[Error diagrams]\label{ms:lem:v8-diagram-errors}
Fix a reduced scheme with $k\ge3$ numbered faces and $v$ original vertices. Choose its marked corners and their order along every scheme edge, and suppress all unmarked degree-two chain vertices. Write $\mathcal G$ for the resulting graph, $V$ for its retained vertices (the original vertices and the marked internal chain vertices), and $\mathcal E$ for its segment multiset. Each segment $e=uw$ has incident face labels $i(e),j(e)$, possibly equal. Let $\mathcal M$ index the individual marks; for $a\in\mathcal M$, let $h(a)$ be its face label and $m(a)\in V$ its vertex. Several marks may have the same vertex. At the weights $g_i(\varepsilon)$ from \eqref{eq:critical-scaling}, define the nonnegative sum
\[
 T_{\mathcal G}(\varepsilon;\lambda)
 =\sum_{l\in\{1,2,\ldots\}^{V}}
   \prod_{e=uw\in\mathcal E}G_{i(e)j(e)}(l_u,l_w)
   \prod_{a\in\mathcal M}\delta_{h(a)}(l_{m(a)}).
\]
There is no distinct-label restriction. A loop supplies its diagonal kernel factor; parallel segments and individual marks are counted with multiplicity. This is the sum for a fixed choice of marked corners and their orders, before multiplication by the fixed inverse-automorphism weight.

In a main diagram every face has exactly one mark. It is a main contact diagram if a mark lies at an original vertex or two marks share an inserted vertex. A repeated-face diagram has two marks on one face and one on every other face, with contacts allowed. A noncubic main diagram is a main diagram whose original reduced scheme is not cubic, equivalently $v<2k-4$; it may have contacts or separated marks. Put $B=2v-2$ and abbreviate $T_{\mathcal G}(\varepsilon;\lambda)$ to $T_{\mathcal G}(\varepsilon)$. Uniformly on compact positive rate sets, with $b=3k-3$, this sum has the following bounds:
\[
\begin{array}{ll}
\text{main contacts:}&O\!\left(\varepsilon^{-\max(B-2,0)}(1+|\log\varepsilon|)^b\right),\\
\text{repeated-face diagrams:}&O\!\left(\varepsilon^{-\max(B-1,0)}(1+|\log\varepsilon|)^b\right),\\
\text{noncubic main diagrams:}&O\!\left(\varepsilon^{-\max(B,0)}(1+|\log\varepsilon|)^b\right).
\end{array}
\]
Each sum is finite at every sufficiently small positive $\varepsilon$. Multiplication by $\varepsilon^{4k-10}$ sends all three classes to zero locally uniformly, including schemes with one original vertex.
Multiplying by a fixed diagram weight preserves these bounds, and summing over the finitely many corner choices and mark orders only changes the implicit constants.
\end{Proposition}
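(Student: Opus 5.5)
The plan is to bound $T_{\mathcal G}(\varepsilon)$ by replacing every factor with the envelopes of Remark~\ref{ms:lem:three-face-kernels}:
\[
G_{ij}(l,m)\le C\min(l,m)^3\max(l,m)^{-2}e^{-c\varepsilon|l-m|},\qquad \delta_i(l)\le Cl^{-3}e^{-c\varepsilon l}.
\]
The resulting label sum is then split by a dyadic decomposition organised along a label ordering. Fix a total ordering of $V$ by nondecreasing labels; there are finitely many orderings. For a fixed ordering, assign to each vertex its dyadic scale $2^{n_v}\le l_v<2^{n_v+1}$, so that $n_{v_1}\le\dots\le n_{v_{|V|}}$.

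On such a dyadic cell the envelope is, up to constants, a monomial in the scales $2^{n_v}$. The exponential factors are discarded except where they are needed to cut off scales above $\varepsilon^{-1}$. Writing the product as a telescoping product over the gaps $n_{v_{t+1}}-n_{v_t}$, one sees the following.
\begin{itemize}
\item Each edge contributes $+3$ to every initial segment containing exactly one of its endpoints.
\item Each mark contributes $-3$ to the initial segments containing it.
\item Each label sum contributes $+1$.
\item Each internal edge contributes $+1$ to initial segments containing both endpoints, from the $\min^3\max^{-2}$ balance once both endpoints are below the cut.
\end{itemize}
The net exponent attached to the gap after an initial segment $S$ is precisely the low-label exponent $L(S)$. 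Symmetrically, measured from the top, the exponent attached to a final segment $H$ is $D(H)$.

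The resulting geometric sum over the scales is controlled as follows. Gaps governed by $L(S)>0$ sum to $O(1)$, and $L(S)=0$ costs a factor $|\log\varepsilon|$. Negative $L(S)$ cannot occur for small labels because of the cutoff $l\ge1$. The overall scale is bounded by $\varepsilon^{-1}$ thanks to the exponential decay, with at most $(1+|\log\varepsilon|)$ per level; there are at most $b=3k-3$ levels, which gives the log power. The top level contributes $\varepsilon^{-\max(D,0)}$, where $D$ is the largest of $D(\mathrm{all})$ and $D(H)$ over final segments $H$.

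By Lemma~\ref{ms:lem:general-marked-bounds}, $D(H)\le B-3$ for proper $H$ and $D(\mathrm{all})=B-r-2C$. Hence:
\begin{itemize}
\item main contacts, with $C\ge1$ and $r=0$, give $\varepsilon^{-\max(B-2,0)}$;
\item repeated-face diagrams, with $r=1$, give $\varepsilon^{-\max(B-1,0)}$;
\item noncubic main diagrams give at worst $\varepsilon^{-\max(B,0)}$.
\end{itemize}
Intermediate low levels with negative $L(S)$ are harmless for the same reason: the sum starts at $1$, so a negative exponent is bounded by the scale above.

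The step requiring the most care is exactly this bookkeeping. The scale of each segment depends on which endpoint is smaller. The tropical (max-plus) evaluation of the dyadic sum must be shown to be bounded by the maximum over the two extreme exponents $L$ and $D$, using $D(H)=D(\mathrm{all})-L(S)$, rather than by some mixed quantity. I would handle it by treating the log of the cell weight as a piecewise linear function of $(n_v)$ on the simplex $0\le n_1\le\dots\le n_{|V|}\le\log_2\varepsilon^{-1}+O(1)$. Its maximum is attained at vertices of the simplex, which correspond to splitting $V$ into a low set at scale $1$ and a high set at scale $\varepsilon^{-1}$. The value at such a splitting is $\varepsilon^{-D(H)}$ with $H$ the high set, so the maximum is controlled by $D(H)\le B-3$ or by $D(\mathrm{all})$. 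The lattice-point count of the simplex supplies the logarithms.

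Finiteness at fixed $\varepsilon$ follows from the exponential factors. For the conclusion, note $4k-10=2(2k-4)-2\ge B$, and in the noncubic case $v<2k-4$ gives $B\le 4k-12$. Thus $\varepsilon^{4k-10}$ times each bound is $O(\varepsilon^{\min(2,\,4k-10)}|\log\varepsilon|^b)\to0$ for $k\ge3$, including the cases with $B\le0$. Uniformity in $\lambda$ comes from the uniform constants in the lemmas.
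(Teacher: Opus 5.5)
Your proposal is correct and follows essentially the same route as the paper: envelopes from Remark~\ref{ms:lem:three-face-kernels}, then a dyadic decomposition with nonnegative indices on a fixed ordering chamber, where the exponent is genuinely linear (not merely piecewise linear), so its maximum sits at a simplex vertex with value $D(H)$ times the top scale. The paper expresses this as $P(n)=D(\mathrm{all})d_1+\sum_{h\ge2}D(H_h)d_h$, hence $P(n)\le d_*n_N$ with $d_*=\max(0,D(\mathrm{all}),\max_H D(H))$; a lattice-point count then gives the logarithms, and Lemma~\ref{ms:lem:general-marked-bounds} bounds $d_*$.

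Two small points. For repeated-face diagrams on a cubic scheme ($B=4k-10$) the normalized bound is only $O(\varepsilon(1+|\log\varepsilon|)^b)$, not $O(\varepsilon^2(1+|\log\varepsilon|)^b)$, though this still tends to zero. Also, the cutoff $e^{-c\varepsilon\max_v l_v}$ needs the paper's path argument, which combines the chain exponentials with the exponential of at least one mark.
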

\begin{proof}
Use the kernel and corner bounds of Remark~\ref{ms:lem:three-face-kernels} and the vertex-set exponents of Lemma~\ref{ms:lem:general-marked-bounds}. We first bound the discrete error diagrams with the lower label cutoff fixed at $1$. This is necessary for contacts: their low-set exponents need not be positive. Fix one marked diagram $\mathcal G$ with $N$ retained vertices, write $\nu(v)$ for its mark multiplicity, and let $T_{\mathcal G}(\varepsilon)$ be its nonnegative label sum. The kernel and mark envelopes dominate it by a constant times
\[
 \sum_{l_v\ge1}
 \prod_{uv\in E(\mathcal G)}
       \frac{\min(l_u,l_v)^3}{\max(l_u,l_v)^2}
 \prod_v l_v^{-3\nu(v)}
 \exp\!\left\{-c\varepsilon
       \left(\sum_{uv\in E(\mathcal G)}|l_u-l_v|
                     +\sum_v\nu(v)l_v\right)\right\}.
\]
Loops and multiple edges are included with multiplicity. There is at least one mark. By connectedness, a path from a vertex of maximum label to any marked vertex gives
\[
 \max_v l_v\le
 \sum_{uv\in E(\mathcal G)}|l_u-l_v|+\sum_v\nu(v)l_v.
\]
Thus the displayed exponential is at most $e^{-c\varepsilon\max l_v}$.

Partition each label into blocks $2^{n_v}\le l_v<2^{n_v+1}$ with $n_v\in\mathbb Z_{\ge0}$. A block contains at most $2^{n_v}$ integer labels. Up to a constant depending only on the diagram, the contribution of a fixed collection of blocks is therefore bounded by
\[
 2^{P(n)}e^{-c\varepsilon2^{\max n_v}},\qquad
 P(n)=\sum_v(1-3\nu(v))n_v
       +\sum_{uv\in E(\mathcal G)}
                    \bigl(3\min(n_u,n_v)-2\max(n_u,n_v)\bigr).
\]
Order the vertices so that $n_1\le\cdots\le n_N$, with any choice for ties; summing over at most $N!$ such orders only changes the constant. Put $d_1=n_1$, $d_h=n_h-n_{h-1}$ for $h\ge2$, and $H_h=\{v_h,\ldots,v_N\}$. Every $d_h$ is nonnegative and $\sum_hd_h=n_N$. Increasing $d_1$ raises all scales, whereas increasing $d_h$ for $h\ge2$ raises exactly the final set $H_h$. The measure, internal edges, crossing edges, and marks then give the exact identity
\[
 P(n)=D(\mathrm{all})d_1+\sum_{h=2}^N D(H_h)d_h.
\]
For example, a crossing edge has its upper endpoint in $H_h$ and contributes $-2$ to its coefficient; an internal edge, including a loop, contributes $1$. This verifies the identity also for disconnected final sets.

Define
\[
 d_* =\max\bigl(\{0,D(\mathrm{all})\}
       \cup\{D(H):\varnothing\ne H\subsetneq V(\mathcal G)\}\bigr).
\]
Then $P(n)\le d_*n_N$. For $n_N=R$ the number of nonnegative gap vectors is $\binom{R+N-1}{N-1}$, including the case $N=1$. Consequently
\[
 T_{\mathcal G}(\varepsilon)
 \le C_{\mathcal G}\sum_{R=0}^\infty
       (R+1)^{N-1}2^{d_*R}e^{-c\varepsilon2^R}
 =O\!\left(\varepsilon^{-d_*}
                  (1+|\log\varepsilon|)^N\right).
\]
Here is the cutoff calculation, also when $d_*=0$. For $0<\varepsilon\le1/2$ put $L=\lfloor\log_2(\varepsilon^{-1})\rfloor$. The sum for $R\le L$ is at most $(L+1)^N2^{d_*L}$. For $R=L+t$, $t\ge1$, use $\varepsilon2^L>1/2$; the remaining sum is at most a constant times
\[
 \varepsilon^{-d_*}(L+1)^{N-1}
 \sum_{t\ge1}(t+1)^{N-1}2^{d_*t}e^{-(c/2)2^t},
\]
whose final series converges. All constants are uniform on a compact positive rate set. The estimate is independent of an upper skeleton-label cutoff and proves finiteness of each discrete diagram sum before that cutoff is removed.

This argument covers arbitrary contact multiplicity without assuming summability at a rescaled zero boundary. In particular, if a low initial set $S$ has $L(S)<0$, its complement has $D(S^c)=D(\mathrm{all})-L(S)>D(\mathrm{all})$; that possible larger degree is already included in $d_*$. The scale gaps share the constraint $\sum_hd_h=R$, so their positive exponents are maximized, rather than added as independent powers of $\varepsilon^{-1}$.

Apply Lemma~\ref{ms:lem:general-marked-bounds}. In a main contact diagram $r=0$ and $C\ge1$, so $D(\mathrm{all})=B-2C\le B-2$, while every proper nonempty $H$ satisfies $D(H)\le B-3$. Hence $d_*\le\max(B-2,0)$. In a repeated-face diagram $r=1$, $C\ge0$, so $d_*\le\max(B-1,0)$. Finally, a noncubic main diagram has $B\le4k-12$, $D(\mathrm{all})\le B$ and $D(H)\le B-3$, so $d_*\le\max(B,0)$. There are finitely many diagrams for fixed $k$, and $N=v+I\le(2k-4)+(k+1)=3k-3$. Thus one may take the fixed logarithmic power $b=3k-3$ in the uniform bounds
\[
 \begin{array}{ll}
 \text{main contacts:}&
 O\!\left(\varepsilon^{-\max(B-2,0)}(1+|\log\varepsilon|)^b\right),\\
 \text{repeated-face diagrams:}&
 O\!\left(\varepsilon^{-\max(B-1,0)}(1+|\log\varepsilon|)^b\right),\\
 \text{noncubic main diagrams:}&
 O\!\left(\varepsilon^{-\max(B,0)}(1+|\log\varepsilon|)^b\right).
 \end{array}
\]
Write $A=4k-10\ge2$. Since always $B\le A$, the first two classes have exponents at most $A-2$ and $A-1$, respectively; the noncubic class has exponent at most $A-2$. Thus multiplication by $\varepsilon^A$ sends all three classes to zero locally uniformly, including schemes with one original vertex, where $B=0$.
\end{proof}

\subsection{The separated cubic diagrams}
We now consider the diagrams that remain. Separation means that marks occupy distinct internal chain vertices; their labels may still coincide. The positive subset exponents will turn the local kernel limits into a uniform limit of the full label sums.
\begin{Proposition}[Uniform Riemann-sum limit]\label{ms:lem:v8-separated-limit}
Fix a cubic scheme with $k\ge3$ numbered faces, one chosen corner mark of each face, and a fixed order of the marks along each scheme edge. Assume the marks are separated. Let $\mathcal G$ be the subdivided diagram, with vertex set $V$ of size $N=3k-4$ and edge multiset $\mathcal E$ of size $E=4k-6$. Write $m_i\in V$ for the vertex carrying the mark of face $i$. For each segment $e=uv$, let $i(e),j(e)$ be its two incident face labels, allowing $u=v$ and $i(e)=j(e)$.

For $\lambda\in(0,\infty)^k$ and sufficiently small $\varepsilon>0$, evaluate $G_{ij}$ and $\delta_i$ at the weights $g_i(\varepsilon)$ in \eqref{eq:critical-scaling}. For a label vector $l\in\{1,2,\ldots\}^{V}$, define
\[
 W_{\mathcal G,\varepsilon}(l;\lambda)
 =\prod_{e=uv\in\mathcal E}G_{i(e)j(e)}(l_u,l_v)
   \prod_{i=1}^k\delta_i(l_{m_i}),\qquad
 T_{\mathcal G}(\varepsilon;\lambda)
 =\sum_{l\in\{1,2,\ldots\}^{V}}W_{\mathcal G,\varepsilon}(l;\lambda).
\]
There is no distinct-label restriction: separation concerns the marked vertices. Each loop supplies its diagonal kernel factor, and parallel segments are separate factors. This is the sum for a fixed choice of marked corners and orders, before multiplication by the fixed inverse-automorphism weight; choices of different corners on the same face are counted separately.

At $\eta_i=\lambda_i/4$, define the nonnegative integral
\[
 J_{\mathcal G}(\lambda)
 =\int_{(0,\infty)^{V}}
   \prod_{e=uv\in\mathcal E}\bigl(3K_{i(e)j(e)}(x_u,x_v)\bigr)
   \prod_{i=1}^k\frac{p_i(x_{m_i})}{3}
   \prod_{v\in V}dx_v.
\]
Then $J_{\mathcal G}$ is finite and continuous, and the normalized discrete diagram sum converges to this integral:
\[
 \varepsilon^{4k-10}T_{\mathcal G}(\varepsilon;\lambda)
 \longrightarrow J_{\mathcal G}(\lambda)
\]
locally uniformly on positive rate sets.

More precisely, define $f_\varepsilon(x;\lambda)$ to be
$\varepsilon^{E-3k}W_{\mathcal G,\varepsilon}(l;\lambda)$ on each cell
$\prod_{v\in V}[\varepsilon l_v,\varepsilon(l_v+1))$, $l\in\{1,2,\ldots\}^{V}$,
and zero if any coordinate is below $\varepsilon$. For every compact
$Q\subset(0,\infty)^k$, there exist $c,\varepsilon_0>0$ such that, uniformly
for $\lambda\in Q$ and $0<\varepsilon<\varepsilon_0$,
\[
 \int_{\min_vx_v<\delta}f_\varepsilon(x;\lambda)\,\prod_vdx_v=O(\delta),
 \qquad
 \int_{\max_vx_v>T}f_\varepsilon(x;\lambda)\,\prod_vdx_v=O(e^{-cT})
 \quad(\delta,T>0).
\]
The same bounds hold for the total mass of cells on which some
$\varepsilon l_v<\delta$, or some $\varepsilon l_v>T$, respectively.
Here $c$, $\varepsilon_0$, and the implicit constants may depend on $Q$ and
the diagram, but not on $\lambda$, $\varepsilon$, $\delta$, or $T$.
Multiplying both the sum and the integral by a fixed diagram weight preserves
these conclusions.
\end{Proposition}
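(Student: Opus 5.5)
Throughout we use the normalization bookkeeping $|V|=N=3k-4$ and $E=4k-6$. The cell volume is $\varepsilon^N$, so $\int f_\varepsilon=\varepsilon^{N+E-3k}T_{\mathcal G}=\varepsilon^{4k-10}T_{\mathcal G}$. Thus the convergence claim says exactly that $\int f_\varepsilon\to J_{\mathcal G}$. By Lemmas~\ref{ms:lem:v8-corner-limit} and~\ref{ms:lem:v8-chain-limit}, each factor $\varepsilon G_{ij}$ converges locally uniformly to $3K_{ij}$, and each factor $\varepsilon^{-3}\delta_i$ to $p_i/3$, jointly in $\lambda\in Q$. Hence $f_\varepsilon(x;\lambda)\to$ the integrand of $J_{\mathcal G}$, uniformly on compacts of $(0,\infty)^V\times Q$. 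The loop case $u=v$ is included because the kernel limit is joint in both endpoints. The whole proof then reduces to the two tail bounds, which give uniform integrability. Once these hold, we split $\int f_\varepsilon$ into the box $[\delta,T]^V$, where convergence is uniform, plus tails of size $O(\delta)+O(e^{-cT})$. This gives locally uniform convergence. Finiteness of $J_{\mathcal G}$ follows from Fatou's lemma. Continuity of $J_{\mathcal G}$ on $Q$ follows because the box integrals are continuous in $\lambda$ (the limit integrand is jointly continuous, since $K_{ij}$ and $p_i$ depend continuously on $\eta$). The box integrals also approximate $J$ uniformly, since the tail bounds pass to the limit by Fatou. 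The same reasoning applies to the cell-mass versions, because $x_v\in[\varepsilon l_v,\varepsilon(l_v+1))$ differs from $\varepsilon l_v$ by at most $\varepsilon$.

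For the tail estimates, we dominate $f_\varepsilon$ cellwise using Remark~\ref{ms:lem:three-face-kernels}. Writing $y_v=\varepsilon l_v$, the scaling makes the bound homogeneous:
\[
 f_\varepsilon\le C\prod_{uv}\frac{\min(y_u,y_v)^3}{\max(y_u,y_v)^2}\prod_i y_{m_i}^{-3}\,e^{-c\max_v y_v}.
\]
Here we used the path argument of Proposition~\ref{ms:lem:v8-diagram-errors} to collect the exponentials into $e^{-c\max y}$. We then repeat the dyadic-scale decomposition of that proof, now in continuous scales. Put $y_v\in[2^{s_v},2^{s_v+1})$ with $s_v\in\mathbb Z$ allowed to be negative, down to $\log_2\varepsilon$. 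Order the vertices by scale, so that the block contribution is $2^{P(s)}e^{-c2^{s_N}}$. Here $P(s)=L(\mathrm{all})s_1+\sum_{h\ge2}(\cdots)$. It is convenient to rewrite $P$ through the low-label initial segments $S_h=\{v_1,\dots,v_{h-1}\}$, namely
\[
 P=L(\mathrm{all})\,s_N-\sum_h L(S_h)(s_h-s_{h-1}),
\]
using $D(H)=D(\mathrm{all})-L(H^c)$ and $D(\mathrm{all})=L(\mathrm{all})$ in the homogeneous rescaled form, where the exponent of the full set is $0$ after the factor $\varepsilon^{4k-10}$ has been absorbed. This is the step I expect to need the most care: one must check the sign convention so that lowering an initial segment $S$ by one dyadic step gains $2^{-L(S)}$.

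Lemma~\ref{ms:lem:general-marked-bounds} then gives $L(S)\ge3>0$ for every proper nonempty low-label set, and $L(\mathrm{all})=4k-10>0$ enters only through the exponential cutoff at the top scale. Summing the gaps $s_h-s_{h-1}\ge0$ therefore converges geometrically. The top scale $s_N$ is controlled above by $e^{-c2^{s_N}}$ and below by $2^{(4k-10)s_N}$. The result is a bound on the full sum that is uniform in $\varepsilon$.

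For the $\delta$-tail we have $\min_v y_v<\delta$, so the lowest scale satisfies $s_1\le\log_2\delta$. Because every proper initial segment has $L\ge3$, and the full set has positive exponent, the sum restricted to $s_1\le\log_2\delta$ is bounded by $C\delta^{\min(3,\,4k-10)}\le C\delta$ for $\delta\le1$; for $\delta>1$ the bound is trivial. The one point to verify is that the minimal relevant exponent is at least one. It is, since $L(S)\ge3$ and $4k-10\ge2$, without logarithmic losses. These losses do not appear because all low-set exponents are strictly positive; this contrasts with the error diagrams, where zero exponents produce logarithmic factors.

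For the $T$-tail, $s_N\ge\log_2T$, and summing $2^{(4k-10)s_N}e^{-c2^{s_N}}$ over this range gives $O(e^{-c'T})$. All constants come from Lemmas~\ref{ms:lem:v8-corner-limit}--\ref{ms:lem:v8-chain-limit}, so they are uniform on $Q$. Multiplying by a fixed diagram weight is trivial.
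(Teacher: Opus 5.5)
Your proposal is correct and follows the paper's proof in all essentials: a cellwise envelope from the kernel and corner bounds, a dyadic decomposition in which the nested-gap identity $P=AR-\sum_h L(S_h)d_h$ is summable because $A=4k-10>0$ and $L(S)\ge3$ for proper initial segments, and a box-plus-tails passage to the limit. The paper differs only in details: it uses an $\varepsilon$-independent envelope $\Phi(x)$ (via $x_v/2\le\varepsilon l_v\le x_v$), gets the $\delta$-tail from $\int\Phi/\min_v x_v<\infty$ by lowering every exponent by one, and deduces continuity from continuity of the lattice sums in $\lambda$ rather than of $K_{ij}$ in $\eta$.

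One small correction: strict positivity of the exponents does not rule out logarithms in your restricted sum over $s_1\le\log_2\delta$. When two nested initial segments share the minimal exponent (this can happen with $L=3$ once $k\ge4$), your bound is of order $\delta^3\log(1/\delta)$. Since the relevant exponent is at least $2$, this still gives the required $O(\delta)$.
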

\begin{proof}
Use Remark~\ref{ms:lem:three-face-kernels} and Lemma~\ref{ms:lem:general-marked-bounds}. Fix a compact positive rate set $Q\subset(0,\infty)^k$ and choose a common sufficiently small $\varepsilon_0>0$ for the kernel and mark estimates on $Q$. Throughout the proof, $\lambda\in Q$ and $0<\varepsilon<\varepsilon_0$; constants may depend on $Q$ and the diagram. Consider a fixed cubic separated main diagram $\mathcal G$. Write $N=|V(\mathcal G)|$, $E=|E(\mathcal G)|$ and $M=k$, with edges counted with multiplicity, and let $\nu(v)$ be the number of marks at vertex $v$. Write $A=4k-10$. Here
\[
 N=3k-4,\qquad E=4k-6,\qquad N+E-3M=A=4k-10\ge2.
\]
For every proper nonempty vertex set $S$, each connected component of $S$ is proper and has $r_S=C_S=0$. The connected-set bound in Lemma~\ref{ms:lem:general-marked-bounds}, and additivity over these components, give $L(S)\ge3$. Also $L(V)=A$. We now connect these exponents explicitly to a common integrable envelope.

For lattice coordinates $y_v=\varepsilon l_v$, put a step function on the cells
\[
 \prod_v[\varepsilon l_v,\varepsilon(l_v+1)),\qquad l_v\ge1,
\]
by multiplying one factor $\varepsilon G_{ij}(l_u,l_v)$ per edge and one factor $\varepsilon^{-3}\delta_i(l_v)$ per mark. Set it to zero if any coordinate is below $\varepsilon$, and denote it by $f_\varepsilon(x;\lambda)$. Fixed scheme weights can be included in an overall constant. Its integral is exactly the normalized diagram sum:
\[
 \int_{(0,\infty)^N}f_\varepsilon(x;\lambda)\,dx
 =\varepsilon^{N+E-3M}T_{\mathcal G}(\varepsilon;\lambda)
 =\varepsilon^A T_{\mathcal G}(\varepsilon;\lambda).
\]
On every nonzero cell, $x_v/2\le y_v\le x_v$, since $l_v\ge1$. Thus the polynomial part of each rescaled kernel is at most four times
$\min(x_u,x_v)^3/\max(x_u,x_v)^2$, and each rescaled mark is at most $8x_v^{-3}$. Connectedness and the presence of a mark give $\max_v y_v\le\sum_{uv}|y_u-y_v|+\sum_v\nu(v)y_v$; this path estimate applies to the $y_v$ and gives exponential decay in $\max_vy_v\ge\frac12\max_vx_v$. Consequently there are constants $C,c_0>0$, uniform for $\lambda$ in any fixed compact positive rate set, such that
\[
 0\le f_\varepsilon(x;\lambda)\le C\Phi(x),\qquad
 \Phi(x)=
 \prod_{uv\in E(\mathcal G)}\frac{\min(x_u,x_v)^3}{\max(x_u,x_v)^2}
 \prod_vx_v^{-3\nu(v)}e^{-c_0\max_vx_v}.
\]
The envelope is independent of $\varepsilon$. Loops contribute $x_v$, multiple edges are repeated factors, and different retained vertices may have equal labels.

To prove integrability of $\Phi$, split each coordinate into dyadic intervals $[2^{n_v},2^{n_v+1})$ with $n_v\in\mathbb Z$. Order the block indices as $n_1\le\cdots\le n_N$, breaking ties arbitrarily and summing over at most $N!$ orders. Integrating the envelope over such a block is bounded by
\[
 C2^{P(n)}e^{-c_0 2^R},\qquad R=n_N,
\]
where the exponent $P$ defined by
\[
P(n)=\sum_v(1-3\nu(v))n_v+\sum_{uv}(3\min(n_u,n_v)-2\max(n_u,n_v))
\]
now has integer indices of either sign. Set
\[
 d_h=n_{h+1}-n_h\ge0,\qquad S_h=\{v_1,\ldots,v_h\},
 \qquad 1\le h<N.
\]
Starting with all block indices equal to $R$, lower the indices in $S_h$ by $d_h$. Lowering such a set contributes minus one from each coordinate volume, minus one from each internal edge, minus three from each crossing edge whose lower endpoint lies in $S_h$, and plus three for each mark in $S_h$. Hence the exact nested-gap identity is
\[
 P(n)=AR-\sum_{h=1}^{N-1}L(S_h)d_h.
\]
An internal loop contributes minus one just like any internal edge. This argument does not require $S_h$ to be connected. It gives the explicit summable bound
\[
 \int\Phi\le
 C\left(\sum_{R\in\mathbb Z}2^{AR}e^{-c_0 2^R}\right)
 \prod_{h=1}^{N-1}\left(\sum_{d_h\ge0}2^{-L(S_h)d_h}\right)<\infty
\]
for each ordering, followed by their finite sum. The $R\to-\infty$ tail converges because $A>0$, the $R\to+\infty$ tail by the exponential, and every gap sum because $L(S_h)>0$.

The same calculation supplies quantitative uniform boundary control. On a block,
\[
 (\min_vx_v)^{-1}\le2^{-n_1}
 =2^{-R+\sum_hd_h}.
\]
Thus $\int\Phi/\min_vx_v<\infty$: replace $A$ by $A-1>0$ and each $L(S_h)$ by $L(S_h)-1>0$ in the preceding geometric sums. It follows that, for $\delta>0$,
\[
 \int_{\min_vx_v<\delta}\Phi(x)\,dx\le C\delta.
\]
Replacing $c_0$ by $c_0/2$ in the same integrability calculation also gives
\[
 \int_{\max_vx_v>T}\Phi(x)\,dx\le C e^{-c_0T/2}.
\]
These estimates hold for the step functions as well, uniformly for $\lambda\in Q$ and $0<\varepsilon<\varepsilon_0$. In particular the cells with some $l_v=1$ lie in $\{\min x_v<2\varepsilon\}$ and have normalized total contribution $O(\varepsilon)$. More generally, if some $\varepsilon l_v<\delta$, their total is $O(\delta)$: it is empty for $\delta\le\varepsilon$, and otherwise their cells lie in $\{\min x_v<2\delta\}$. This controls the cutoff at label one rather than extrapolating a continuum estimate below it.

On every box $[\delta,T]^N$, Remark~\ref{ms:lem:three-face-kernels} makes $f_\varepsilon$ converge locally uniformly in the rates to the product of the limiting kernel and mark factors. This includes equal-coordinate pairs and loops, because its kernel convergence is locally uniform for all positive coordinate pairs. The common envelope and the two tail estimates permit first passing to the limit on this box and then sending $\delta\downarrow0$, $T\to\infty$, uniformly on compact positive rate sets. This proves the required dominated Riemann-sum passage and finiteness of the limiting diagram integrals. The same truncation proves continuity: the finite-cutoff lattice sums are continuous in the rates, their label tails converge locally uniformly by the exponential bound, and the normalized sums have the locally uniform limit just established.
\end{proof}

\subsection{Completion of the main theorem}
We combine the finite diagram expansion with the two preceding propositions. It remains to check that the corner multiplicities and the constants agree with the integral defined in the Introduction.
\begin{proof}[Proof of Theorem~\ref{ms:prop:general-analytic}]
Proposition~\ref{ms:lem:v8-marking-reduction} gives the exact positive marking expansion and bounds its replacement error by repeated-face diagrams. Proposition~\ref{ms:lem:v8-diagram-errors} makes that error, all contacts, and all noncubic main diagrams vanish after scaling by $\varepsilon^{4k-10}$. Proposition~\ref{ms:lem:v8-separated-limit} gives the locally uniform limit for every remaining cubic separated diagram, together with finiteness and continuity. The corner-tree factors at each original vertex have already been paired in the exact reduction, so no additional vertex weight remains. These estimates also justify numerical evaluation of the series. The error-diagram sums are finite for sufficiently small $\varepsilon$ by Proposition~\ref{ms:lem:v8-diagram-errors}, and the separated cubic sums are finite by the uniform boundary bounds in Proposition~\ref{ms:lem:v8-separated-limit} and the finite sum on each bounded label box. The positive marking expansion therefore bounds $\mathcal Z_k(g(\varepsilon))$ by a finite sum of finite quantities. This holds uniformly for the parameters in any fixed compact positive rate set. In fact $\mathcal Z_k(g)<\infty$ whenever $0<g_i<1/12$: choose a sufficiently small $\varepsilon>0$ for the rate vector $(1,\ldots,1)$ such that $g_i\le(1-\varepsilon^4/36)/12$ for every $i$, and compare each nonnegative map weight with its value at these equal, larger weights.

Finally sum over the finitely many cubic separated diagrams. There are $3k-6+k$ kernel segments and $k$ marks. Their constants multiply to
\[
 3^{3k-6+k}(1/3)^k=3^{3k-6}.
\]
To identify the surviving integrals, send each face label to the original scheme edge carrying its mark. This gives a map $\alpha\in\mathcal A_G$. At an internal vertex of a chain whose two sides belong to face $i$, the two incident corners of that face give the coefficient two in $F_\alpha$. When two distinct face marks lie on the same chain, their two orders give the two summands in the fourth case of $F_\alpha$. Regrouping by $\alpha$, with the original symmetry weights and parameters $\eta_i=\lambda_i/4$, therefore gives exactly $\mathcal I_{\{1,\ldots,k\}}(\lambda/4)$. This proves the claimed locally uniform limit.
\end{proof}

\section*{Acknowledgements}
The author acknowledges support from the National Key R\&D Program of China (Grant No.~2023YFA1010102) and the National Natural Science Foundation of China (Grant Nos.~12301173 and~12671171).

The author acknowledges the use of large language models (OpenAI's GPT-6/6.1), through an iterative prompting process, to support the conceptual development and proof construction in this paper. The author has independently verified all mathematical claims and arguments, and assumes full responsibility for the correctness and academic integrity of the final results.

\end{document}